\documentclass[12pt]{article}
\RequirePackage{amsmath,amsfonts,amssymb,amsthm}
\usepackage{graphicx,psfrag,epsf}
\usepackage{enumerate}
\usepackage{natbib}
\usepackage[dvipsnames]{xcolor}
\usepackage[colorlinks,citecolor=OliveGreen,linkcolor=blue,backref=true]{hyperref}
\usepackage{booktabs}
\usepackage{dsfont}
\usepackage{here}
\usepackage{rotating,multirow}
\usepackage{mathtools}
\usepackage{enumitem}

\usepackage{nameref,zref-xr} 
\zxrsetup{toltxlabel} 


\newcommand{\trace}[1]{\operatorname{tr}\left(#1\right)}
\newcommand{\norm}[1]{\left\lVert#1\right\rVert}

\DeclarePairedDelimiter{\abs}{\lvert}{\rvert}

\newcommand{\eps}{\varepsilon}
\newcommand{\sign}[1]{\text{sign}\left\{#1\right\}}

\newcommand{\beps}{\boldsymbol{\varepsilon}}

\newcommand{\bEta}{\boldsymbol{\eta}}
\newcommand{\bBeta}{\boldsymbol{\beta}}

\newcommand{\bx}{\mathbf{x}}
\newcommand{\by}{\mathbf{y}}
\newcommand{\bR}{\mathbf{R}}
\newcommand{\hbR}{\hat{\mathbf{R}}}
\newcommand{\bC}{\mathbf{C}}
\newcommand{\hbC}{\hat{\mathbf{C}}}
\newcommand{\bRho}{\boldsymbol{\rho}}

\DeclareMathOperator*{\aic}{AIC}
\DeclareMathOperator*{\bic}{BIC}
\DeclareMathOperator{\mspe}{MSPE}
\DeclareMathOperator{\bmspe}{\textbf{MSPE}}
\DeclareMathOperator{\mric}{MRIC}
\DeclareMathOperator{\vmric}{VMRIC}
\DeclareMathOperator{\mi}{MI}
\DeclareMathOperator{\bmi}{\textbf{MI}}
\DeclareMathOperator{\vi}{VI}
\DeclareMathOperator{\bvi}{\textbf{VI}}
\DeclareMathOperator{\E}{E}

\newtheorem{theorem}{Theorem}[]
\newtheorem{remark}{Remark}[]
\newtheorem{assumptions}{Assumptions}[]
\newtheorem{definition}{Definition}[]
\newtheorem{proposition}{Proposition}[]
\newtheorem{lemma}{Lemma}[]

\newcommand{\blind}{0}

\addtolength{\oddsidemargin}{-.5in}%
\addtolength{\evensidemargin}{-.5in}%
\addtolength{\textwidth}{1in}%
\addtolength{\textheight}{1.3in}%
\addtolength{\topmargin}{-.8in}%



\begin{document}

\def\spacingset#1{\renewcommand{\baselinestretch}%
{#1}\small\normalsize} \spacingset{1}


\if0\blind
{
  \title{\bf A multivariate extension of the Misspecification-Resistant Information Criterion.}
  \author{Gery Andrés Díaz Rubio\\
    Department of Statistical Sciences, University of Bologna, Italy\\
    and \\
    Simone Giannerini \\
    Department of Statistical Sciences, University of Bologna, Italy\\
    and \\
    Greta Goracci \\
    Faculty of Economics and Management, Free University of Bolzano-Bozen, Italy
    }
  \maketitle
} \fi

\if1\blind
{
  \bigskip
  \bigskip
  \bigskip
  \begin{center}
    {\LARGE\bf  A multivariate extension of the Misspecification-Resistant Information Criterion.}
\end{center}
  \medskip
} \fi
\begin{abstract}
The Misspecification-Resistant Information Criterion (MRIC) proposed in [H.-L. Hsu, C.-K. Ing, H. Tong: \emph{On model selection from a finite family of possibly misspecified time series models}. The Annals of Statistics. 47 (2), 1061--1087 (2019)] is a model selection criterion for univariate parametric time series that enjoys both the property of consistency and asymptotic efficiency. In this article we extend the MRIC to the case where the response is a multivariate time series and the predictor is univariate. The extension requires novel derivations based upon random matrix theory. We obtain an asymptotic expression for the mean squared prediction error matrix, the vectorial MRIC and prove the consistency of its method-of-moments estimator. Moreover, we prove its asymptotic efficiency. Finally, we show with an example that, in presence of misspecification, the vectorial MRIC identifies the best predictive model whereas traditional information criteria like AIC or BIC fail to achieve the task.
\end{abstract}

\noindent%
{\it Keywords:} information criteria, model selection,multivariate time series, Mean Square Prediction Error.
\vfill

\newpage
\spacingset{1.45} 

\section{Introduction}\label{sec1}
The model selection step is a fundamental task in statistical modelling and its implementation typically depends upon the objective of the exercise. In the time series framework the focus is on either forecasting future values or describing/controlling the process that has generated the data (DGP). A good model selection criterion must feature a good ability to identify the model with the ``best'' fit to future values, in a specified sense. In particular, in the parametric time series framework, we can identify two main properties. The first one is consistency, i.e., the ability to select the true DGP with probability one as the sample size diverges. This assumes that a true model exists and is among the set of candidate models. If either the set of candidate models does not contain the true DGP, or, for some reason, a true model cannot be postulated, then a selection criterion should be asymptotically efficient, for instance, in the mean square sense, i.e. it minimizes the mean squared prediction error as the sample size diverges. Starting from the seminal work of Akaike, \cite{AKA1973} a plethora of model selection criteria has been proposed. These include Akaike's AIC \cite{AKA1973, AKA1974}, Schwarz's Bayesian Information Criterion (BIC) \cite{SCH1978}, and Rissanen's Minimum Description Length (MDL) \cite{RIS1978}. Such criteria paved the way for various extensions dealing with different unsolved issues. For instance, the AIC is efficient but not consistent (i.e. it leads to select overfitting models), whereas the BIC is consistent but not efficient, see \cite{HSU2019} for a discussion.
\par
A recent development for model selection in possibly misspecified parametric time series models in the fixed-dimensionality setting is given by the Misspecification-Resistant Information Criterion (hereafter $\mric$) \cite{HSU2019}. Fixed-dimensionality means that the number of observations increases to infinity while the number of ‘true’ parameters is finite. In this respect, the MRIC provides a solution to the original research question of Akaike: it enjoys both consistency, in case the true model is included as a candidate, and asymptotic efficiency when a true model either cannot be assumed or is not included. Moreover, when the number of variables in the model grows with the sample size, the $\mric$ can achieve asymptotic efficiency, without the need for additional criteria. Finally, in the high-dimensional setting, the $\mric$ can be used together with appropriate model selection criteria to identify the best predictive models. The $\mric$ is based upon the additive decomposition of the mean squared prediction error in a term that depends upon the misspecification level and a term that measures the sampling variability of the predictor. The idea is to select the model with smallest variability among those that minimize the misspecification index.
\par
The appealing properties of the $\mric$ make it an ideal tool for omnibus time series model selection but, to date, only the univariate response case has been studied \cite{HSU2019}. In this work we extend the $\mric$ to multivariate time series with a single regressor as to obtain the vectorial MRIC (hereafter $\vmric$). As it will be clear, such an extension does not easily derive from the univariate case since it requires dealing with the dependence structure within the components of the vector of forecasting error and hence relies upon random matrix theory. Such multivariate extension can be used in all those models where many time series depend upon a single regressor, like for instance, in econometrics, where many interest rates depend upon a single macroeconomic indicator, such as inflation. Other possible applications include dimension reduction and hedging, which is intimately connected to the problem of model selection \cite{BES16}.
\par
The rest of the paper is organized as follows: in Section~\ref{sec:notation} we introduce the notation and in Section~\ref{subsec:MRIC} summarize the available results for the univariate case; in Section~\ref{sec:VMIRC} we extend the MRIC approach to multivariate time series with a single regressor. In particular, in Section~\ref{subsec:MSPE_dec} we obtain the asymptotic decomposition of the Mean Squared Prediction Error (hereafter MSPE) matrix  into two parts: the first one is linked to the goodness of fit of the model and the second one depends upon the prediction variance. In Section~\ref{subsec:consistency} we present the $\vmric$ and derive a consistent estimator for it, whereas in Section~\ref{subsec:eff}, we prove the asymptotic efficiency of the $\vmric$. Section~\ref{sec:example} presents an example to assess the effect of misspecification in the $\vmric$ framework. All the proofs are detailed in Section~\ref{sec:proof}. \ref{appendx} contains auxiliary technical lemmas.
\section{Notation and preliminaries}\label{sec:notation}
For each $t$, let $\{\bx_t\}$ and $\{\by_t\}$, with $\bx_t=(x_{t,1},\dots,x_{t,m})^\top$ and $\by_t=(y_{t,1},\dots,y_{t,w})^\top$, be two weakly
stationary stochastic processes defined over the probability space $\left(\Omega, \mathcal{F}, \mathbb{P}\right)$. When $m=1$ ($w=1$, respectively) we write $x_t$ ($y_t$). Given a vector $\mathbf{v}$ and a matrix $\mathbf{M}$, we use $\norm{\mathbf{v}}$ and $\norm{\mathbf{M}}$ to refer to the $\mathcal{L}_2$ vectorial norm  and the matrix norm induced by the Euclidean norm, respectively. We write $o(1)$ ($o_p(1)$) to indicate a sequence that converges (in probability) to zero and $O(1)$ ($O_p(1)$) to indicate a sequence that is bounded (in probability). Moreover, let $\{c_n\}$ be a sequence of scalar random variables whereas $\{\mathbf{v}_n\}$ and $\{\mathbf{M}_n\}$ are sequences of random vectors and random matrices, respectively. We adopt the following notation: $\mathbf{v}_n =o_p(c_n)$ if $\norm{\mathbf{v}_n}/c_n=o_p(1)$; $\mathbf{v}_n=O_p(c_n)$,  if $\norm{\mathbf{v}_n}/c_n=O_p(1)$,
$\mathbf{M}_n =o_p(c_n)$ if $\norm{\mathbf{M}_n}/c_n=o_p(1)$; $\mathbf{M}_n = O_p(c_n)$ if $\norm{\mathbf{M}_n}/c_n=O_p(1)$.
For further details on matrix algebra see \cite{SEB2008, HOR2013, ODE2018}, for multivariate time series see \cite{REI1993, LUT2005, TSA2013}, and for asymptotic tools for vector and matrices, see \cite{JIA2010}.%
\par
Let $\{(\bx_t,\by_t),t \in \{1,\dots,n\}\}$ be the observed sample, and divide the interval $[1,n]$ into the \emph{training set} $[1,N]$ and the \emph{test set} $[N+1,N+h]$, with $h$ being the forecasting horizon. Note that $\bx_t$ can contain both endogenous and exogenous variables, therefore, Model~(\ref{eqn:for_mod}) encompasses many different models including, inter alia, VAR and VARX models. We denote $\bar{\bx}=n^{-1}\sum_{t=1}^{n}\bx_t$ and $\bar{\by}=n^{-1}\sum_{t=1}^{n}\by_t$, i.e. the two sample means. Without loss of generality assume $\E[\bx_t]=\E[\by_t]=\boldsymbol 0$. In order to forecast $\by_{n+h}$, $h\geq 1$, we adopt the following $h$-step ahead forecasting Model:
\begin{equation}\label{eqn:for_mod}
	\by_{t+h}= \mathbf{B}_h \bx_t+\beps_t^{(h)},
\end{equation}
\par\noindent
where $\mathbf{B}_h$ is a $(w\times m)$ matrix and $\beps_t^{(h)}$ is the vector containing the $w$ $h$-step ahead forecast errors; as before, if $w=1$ we write $\eps_t^{(h)}$.
\begin{remark}
Since the model can possibly be misspecified, the prediction error vector $\beps_t^{(h)}$ can be serially correlated, and also correlated with $\bx_s$, $s\neq t$. Moreover, the multivariate framework differs from  \cite{HSU2019} in different key aspects. For instance, $(i)$ the components of the error vector can be cross-correlated, and $(ii)$ $\bx_t \beps_t^{(h)}$ and $\bx_k \beps_k^{(h)}$, for $t\neq k$, can also be both serially and cross correlated.
\end{remark}
Define
\begin{equation}\label{eqn:def_R}
	\hbR=N^{-1} \sum_{t=1}^{N}\bx_t\bx_t^\top\qquad\text{ and }\qquad \bR =\E[\bx_1 \bx_1^\top].
\end{equation}
Then, the ordinary least squares estimator (hereafter OLS) of $\mathbf{B}_h$ results:
\begin{equation}\label{eqn:ols}
	\hat{\mathbf{B}}_n(h)= \hbR^{-1}\left(N^{-1}\sum_{t=1}^{N} \bx_t \by_{t+h}^\top\right).
\end{equation}
When $w=1$, $\mathbf{R}$ and $\mathbf{B}$ become $R$ and $\boldsymbol\beta$, respectively. The prediction of $\by_{n+h}$, $h\geq 1$, is given by
\begin{equation}\label{eqn:for_y}
	\hat{\by}_{n+h}=\hat{\mathbf{B}}_n(h)\bx_n
\end{equation}
and the corresponding Mean Squared Prediction Error matrix is
\begin{equation}\label{eqn:MSPE}
	\bmspe_h=\E\left[(\by_{n+h}-\hat{\by}_{n+h})(\by_{n+h}-\hat{\by}_{n+h})^\top\right].
\end{equation}
\subsection{The MRIC for parametric univariate time series models}\label{subsec:MRIC}
In \cite{HSU2019}, the authors focused on the case $w=1$ and $m\geq1$. Under appropriate conditions, they obtained the following asymptotic decomposition of $\mspe$:
\begin{align}
	\mspe_h &= \E\left[(y_{n+h}-\hat{y}_{n+h})^2\right] = \mi_h+n^{-1}(\vi_h+o(1)),\label{eqn:MSPE_dec}\\
\text{with}\quad\mi_h &= \E\left[\left(\eps_n^{(h)}\right)^2\right],\quad
\vi_h = \trace{\mathbf{R}^{-1}\mathbf{C}_{h,0}} +2\sum_{s=1}^{h-1}\trace{\mathbf{R}^{-1}\mathbf{C}_{h,s}},\nonumber
\end{align}
\noindent
 where $\mathbf{C}_{h,s} = \E\left[\bx_1\bx_{1+s}^\top \eps_1^{(h)} \eps_{1+s}^{(h)}\right]$, $s\geq0$, is the cross-covariance matrix between the regressors and the $h$-step ahead prediction error at lag $s$.
\begin{remark}
The first part of Eq.~(\ref{eqn:MSPE_dec}) is the Misspecification Index ($\mi$), linked to the goodness-of-fit of the model and coincides with the $h$-step ahead prediction error variance. The second component is the Variability Index ($\vi$), which depends upon the variance of the $h$-step ahead predictor, $\hat{y}_{n+h}= \hat{\boldsymbol{\beta}}_n^\top (h) \mathbf{x}_n$, and is also linked to the bias of the estimator of $\boldsymbol{\beta}_h$.
\end{remark}
\noindent
Based upon the above decomposition, the $\mric$ is defined as follows:
\begin{align}
	\mric_h &= \hat{\mi}_h+\frac{\alpha_n}{n}\hat{\vi}_h,\label{eqn:MRIC_uni}
\end{align}
with $\hat{\mi}_h$ and $\hat{\vi}_h$ being the estimators of $\mi_h$ and $\vi_h$ respectively, i.e.:
\[
\hat{\mi}_h = N^{-1}\sum_{t=1}^{N}\left(\hat{\eps}_t^{(h)}\right)^2,\quad
\hat{\vi}_h = \trace{\hat R^{-1}\hat{\mathbf{C}}_{h,0}} + 2 \sum_{s=1}^{h-1} \trace{\hat R^{-1} \hat{\mathbf{C}}_{h,s}},
\]
\noindent
where $\hat{\mathbf{C}}_{h,s} = (N-s)^{-1}  \sum_{t=1}^{N-s}\bx_t\bx_{t+s}^\top \hat{\eps}_t^{(h)}\hat{\eps}_{t+s}^{(h)}$ and
$\hat{\eps}_t^{(h)}=y_{t+h}-\hat{\bBeta}_n(h)\bx_t$ is the estimated forecast error; $\alpha_n$ is a penalization term sequence such that, as $n$ increases:
\begin{align}\label{eqn:penalty}
\frac{\alpha_n}{\sqrt{n}}\to+\infty\qquad\text{ and }\qquad \frac{\alpha_n}{n}\to0.
\end{align}
\noindent
It is shown that  $\hat{\mi}_h$ and $\hat{\vi}_h$ are consistent estimators of ${\mi}_h$ and ${\vi}_h$, moreover the asymptotic efficiency of the $\mric$ is proved. By minimizing this criterion, the model which minimizes $\vi$ among those with minimum $\mi$ is selected. Among other features, the $\mric$ is particularly helpful in situations where competing models present the same goodness-of-fit and the same number of parameters.
\begin{remark}
The type of penalty considered in \cite{HSU2019} is similar to that used in \cite[p. 230]{SHI1989} for the correctly specified case.
\end{remark}
\section{A multivariate extension of the MRIC framework}\label{sec:VMIRC}

In this section we extend the $\mric$ approach to the case where the response is a multivariate time series ($w\geq2$) and the predictor is univariate ($m=1$), for a generic $h$-step ahead forecast.
\noindent
Hence, Model~(\ref{eqn:for_mod}) reduces to $\by_{t+h}=\bBeta_h x_t+ \beps_t^{(h)}$, namely:
\begin{equation}\label{eqn:for_mod_case1}
	\begin{cases}
		y_{t+h,1}= \beta_{h,1}x_t+\eps_{t,1}^{(h)} \\
		y_{t+h,2}= \beta_{h,2}x_t+\eps_{t,2}^{(h)}\\
        \vdots\\
        y_{t+h,w}= \beta_{h,w}x_t+\eps_{t,w}^{(h)}
	\end{cases}
\end{equation}

\subsection{Asymptotic decomposition of the MSPE matrix}\label{subsec:MSPE_dec}
We extend the asymptotic representation of the $\mspe_h$ defined in (\ref{eqn:MSPE_dec}) which is the key step to derive the $\vmric$ in this multivariate framework. We rely upon the following assumptions, which are the natural multivariate extensions of those in \cite{HSU2019}.
\begin{assumptions}
\begin{align*}
	(\text{C}1)	& \quad \exists \ q_1 > 5, 0<K_1<\infty : \ \text{for any } 1 \leq n_1 < n_2 \leq n,\\
			    & \E\left[
							\left\lvert \left( n_2-n_1+1 \right)^{-1/2}
								\sum_{t=n_1}^{n_2} x_{t}^2 - \E\left[ x_{t}^2 \right]
							\right\rvert^{q_1}
							\right]\leq	 K_1.\\
	(\text{C}2)	&\quad 1. \ \mathbf{C}_{h,s} = \ \E\left[ \boldsymbol{\eps}_t^{(h)} x_t \left(\boldsymbol{\eps}_{t+s}^{(h)} x_{t+s}\right)^\top  \right] \perp t,\\
				&\quad 2. \ \E\left[ x_1 x_n \eps_{1,i}^{(h)} \eps_{n,j}^{(h)}  \right] = \ o(n^{-1}) \ \forall \ i,j \in\{ 1, \dots, w\}.\\
	(\text{C}3)	& \quad 1. \sup_{-\infty<t<\infty} \E\left[ | x_t |^{10} \right] < \infty,\\
				& \quad 2. \sup_{-\infty<t<\infty} \E\left[ \norm{\boldsymbol{\eps}_{t}^{(h)}}^{6} \right] < \infty.\\
	(\text{C}4)	& \quad \exists \ 0 < K_2 < \infty : \ \text{for} \ 1\leq n_1 < n_2 \leq n,
				 \quad \E\left[\norm{\left(n_2-n_1+1\right)^{-\frac{1}{2}} \sum_{t=n_1}^{n_2} \boldsymbol{\eps}_t^{(h)} x_t }^5 \right] < K_2.\\
	(\text{C}5)	&	 \quad \text{For any } q>0, \ \E\left[ \left|\hat{R}^{-1} \right|^q \right] = O(1).\\
	(\text{C}6)	& \quad \exists  \mathcal{F}_t \subseteq \mathcal{F}, \mathcal{F}_t \ \text{an increasing sequence of } \sigma\text{-fields such that: }\\
				&	\quad 1. \ 
					x_t \ \text{is} \ \mathcal{F}_t\text{-measurable}\\
				&	\quad 2. \ \sup_{-\infty<t<\infty} \E\left[ \left|\E\left[ x_t^2 \mid  \mathcal{F}_{t-k}\right] - R\right|^3\right] = \ o(1), \text{ as } k \rightarrow\infty,\\
				&	\quad 3. \ \sup_{-\infty<t<\infty} \E\left[ \norm{\E\left[ \boldsymbol{\eps}_{t}^{(h)} x_t \mid \mathcal{F}_{t-k}\right]}^3\right] = \ o(1), \text{ as } k \rightarrow\infty.			
	\end{align*}
\end{assumptions}
\begin{theorem}\label{thm:MSPE_dec_case1}
	Under the regularity conditions (C1) -- (C6), the asymptotic expression of the $\bmspe_h$ defined in (\ref{eqn:MSPE}) results
	\begin{align}\label{eqn:MSPE_dec_case1}
		&N \left\{ \E\left[ \left( \by_{n+h} - \hat{\by}_{n+h} \right) \left( \by_{n+h} - \hat{\by}_{n+h} \right)^\top - \E\left[
		\beps_{n}^{(h)} \beps_{n}^{(h)^\top} \right] \right] \right\}\\
		& =  R^{-1} \E\left[   \left(\beps_{1}^{(h)} x_1\right)\left(\beps_{1}^{(h)} x_1\right)^\top \right]\nonumber\\
		&+ R^{-1} \E\left[\sum_{s=1}^{h-1} \left\{\left( \beps_1^{(h)} x_1\right) \left(\beps_{s+1}^{(h)} x_{s+1}\right)^\top + \left(\beps_{s+1}^{(h)} x_{s+1}\right) \left( \beps_1^{(h)} x_1 \right)^\top\right\} \right] + o(1).\nonumber
	\end{align}
\end{theorem}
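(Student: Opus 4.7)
The natural starting point is the algebraic identity $\by_{n+h}-\hat\by_{n+h}=\beps_n^{(h)}-x_n\hat{\mathbf d}_N$, where $\hat{\mathbf d}_N:=\hat R^{-1}N^{-1}\sum_{t=1}^N x_t\beps_t^{(h)}$ collects the sampling error of the OLS estimate. Forming the outer product, taking expectations and subtracting the irreducible noise $\E[\beps_n^{(h)}\beps_n^{(h)\top}]$, the left-hand side of (\ref{eqn:MSPE_dec_case1}) equals
\[
N\bigl\{\E[x_n^2\hat{\mathbf d}_N\hat{\mathbf d}_N^\top]-\E[x_n\beps_n^{(h)}\hat{\mathbf d}_N^\top]-\E[x_n\hat{\mathbf d}_N\beps_n^{(h)\top}]\bigr\}.
\]
The plan is to show that the two cross terms are asymptotically negligible and that the quadratic term converges to the target matrix on the right-hand side.

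For the cross terms, I would first use assumption (C5), together with the concentration $\hat R-R=O_p(N^{-1/2})$ implied by (C1), to replace $\hat R^{-1}$ by $R^{-1}$ with a remainder that is $o(1)$ after scaling by $N$; H\"older's inequality with the moment bounds (C3)--(C4) handles the error cleanly. The leading part reduces to $R^{-1}\sum_{t=1}^N \mathbf C_{h,n-t}^\top$ (the $N^{-1}$ in $\hat{\mathbf d}_N$ cancels the outer $N$). Because the forecast lies beyond the training window, each lag $n-t\ge h$, so (C2)-2 together with the conditional-moment bound (C6)-3 forces this sum to $o(1)$.

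For the quadratic term, set $\mathbf S_N:=\sum_{t=1}^N x_t\beps_t^{(h)}$, so that $N\E[x_n^2\hat{\mathbf d}_N\hat{\mathbf d}_N^\top]=N^{-1}\E[x_n^2\hat R^{-2}\mathbf S_N\mathbf S_N^\top]$. I would decouple the scalar prefactor $x_n^2\hat R^{-2}$ from the random matrix $\mathbf S_N\mathbf S_N^\top$ using the mixing-type conditions (C6) and the moment bounds (C3)--(C5), replacing it by its limit $R\cdot R^{-2}=R^{-1}$ with $o(1)$ error. What remains is $R^{-1}N^{-1}\E[\mathbf S_N\mathbf S_N^\top]$, which by stationarity (C2)-1 expands as
\[
N^{-1}\E[\mathbf S_N\mathbf S_N^\top]=\mathbf C_{h,0}+\sum_{\ell=1}^{N-1}\Bigl(1-\tfrac{\ell}{N}\Bigr)\bigl(\mathbf C_{h,\ell}+\mathbf C_{h,\ell}^\top\bigr).
\]
The weights $\ell/N$ vanish uniformly over the finite range $\ell=1,\dots,h-1$, while (C2)-2 controls the tail $\ell\ge h$ as $o(1)$. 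Identifying $\mathbf C_{h,0}=\E[(\beps_1^{(h)} x_1)(\beps_1^{(h)} x_1)^\top]$ and $\mathbf C_{h,s}=\E[(\beps_1^{(h)} x_1)(\beps_{s+1}^{(h)} x_{s+1})^\top]$ then matches the right-hand side of (\ref{eqn:MSPE_dec_case1}).

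The principal obstacle is the decoupling inside the quadratic term. In \cite{HSU2019} the corresponding argument is essentially scalar, but here $\mathbf S_N\mathbf S_N^\top$ is a random matrix whose operator norm must be controlled simultaneously with the concentration of the scalar factor $x_n^2\hat R^{-2}$; this is where (C6) and random-matrix-style moment inequalities become essential, since one must factorise expectations of products of a matrix and a scalar without losing the $o(1)$ rate. A second delicate point is to ensure that the tail $\sum_{\ell\ge h}\|\mathbf C_{h,\ell}+\mathbf C_{h,\ell}^\top\|$ is genuinely $o(1)$ in operator norm, uniformly across the coordinate indices of $\beps_n^{(h)}$ rather than merely element-wise, which is precisely the novelty compared with the univariate proof.
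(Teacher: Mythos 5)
Your setup coincides with the paper's: writing $\by_{n+h}-\hat\by_{n+h}=\beps_n^{(h)}-x_n\hat R^{-1}N^{-1}\sum_{t=1}^N x_t\beps_t^{(h)}$ and splitting the recentred MSPE matrix into two cross terms plus a quadratic term is exactly the decomposition $(\mathrm{I})+(\mathrm{II})$ of Section~\ref{sec:A1}, and your replacement of $\hat R^{-1}$ by $R^{-1}$ via truncation, H\"older's inequality and (C1), (C3)--(C6) mirrors Propositions~\ref{prop:thm1_1} and \ref{prop:thm1_2}. The gap is in what you do next. You assert that the cross terms are asymptotically negligible because each lag $n-t\ge h$ and ``(C2)-2 \dots forces this sum to $o(1)$'', and likewise that (C2)-2 kills the tail $\sum_{\ell\ge h}(1-\ell/N)(\bC_{h,\ell}+\bC_{h,\ell}^\top)$ of the quadratic term. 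Neither claim follows from the stated assumptions: (C2)-2 only gives $\norm{\bC_{h,s}}=o(s^{-1})$ as $s\to\infty$, and (C6)-3 only gives $\norm{\bC_{h,s}}\to 0$ with no rate; neither implies absolute summability of the cross-covariances, so $\sum_{j=h}^{N-1}\norm{\bC_{h,j}}$ may diverge (for instance $\norm{\bC_{h,j}}\asymp (j\log j)^{-1}$ is compatible with (C2)-2). Since the paper imposes no summability hypothesis, your two ``$o(1)$'' steps cannot be justified as written.

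What actually makes the theorem true is an exact cancellation that your argument misses. Proposition~\ref{prop:thm1_3} shows that the rescaled cross term converges to $-(D)$, where $(D)=\E\bigl[R^{-1}\sum_{j=h}^{N-1}\{(\beps_1 x_1)(\beps_{j+1}x_{j+1})^\top+(\beps_{j+1}x_{j+1})(\beps_1 x_1)^\top\}\bigr]$ is \emph{not} shown to be small, while Proposition~\ref{prop:thm1_4} shows the quadratic term equals $(1)+(Q)+(D)+o(1)$; there only the weighted piece $N^{-1}\sum_j j\,\bC_{h,j}$ is controlled by (C2)-2 (a Ces\`aro argument using $j\norm{\bC_{h,j}}=o(1)$). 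Adding the two contributions, the $(D)$ terms cancel identically and only the lag-$0$ and lag-$1,\dots,h-1$ terms survive, which is the right-hand side of (\ref{eqn:MSPE_dec_case1}). To repair your proof you would either need to add a summability condition on $\{\bC_{h,s}\}$ strictly stronger than (C2)-2, or keep the long-lag sum explicit in both pieces and exploit the cancellation as the paper does. Your closing concerns about controlling $\mathbf{S}_N\mathbf{S}_N^\top$ in operator norm are legitimate but secondary; the decoupling of $x_n^2$ from the quadratic form is handled in the paper by conditioning on $\mathcal{F}_{n-l_n}$ and invoking (C6)-2, essentially as you propose.
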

\noindent
\subsection{VMRIC and its consistent estimation}\label{subsec:consistency}
In this section we introduce the $\vmric$. Let $\{\alpha_n\}$ be the penalization term sequence defined as in Eq.~(\ref{eqn:penalty}).
\begin{align}
\vmric_h &= \norm{{\bmi}_h} + \norm{ \frac{\alpha_n}{n} {\bvi}_h}  \label{eqn:VMRIC}\\
\text{where } \bmi_h &= \E\left[\left(\beps_t^{(h)}\beps_t^{(h)\top}  \right)\right],\quad
\bvi_h = R^{-1} \left( \bC_{h,0} + \sum_{s=1}^{h-1} \left( \bC_{h,s} + \bC_{h,s}^\top \right) \right),\nonumber\\
\bC_{h,s} &= \E\left[\left(x_t\beps_t^{(h)}\right)\left(x_t\beps_t^{(h)}\right)^\top\right].\nonumber
\end{align}
The $\vmric$ can be estimated via the method of moments as to obtain:
\begin{align}
	\hat\vmric_h &\equiv \norm{\hat{\bmi}_h} + \norm{ \frac{\alpha_n}{n} \hat{\bvi}_h \label{eqn:VMRIC_hat}},\\
\text{where } \hat{\bmi}_h &= N^{-1}\sum_{t=1}^{N}\left(\hat{\beps}_t \hat{\beps}_t^\top \right),\quad
\hat{\bvi}_h = \hat{R}^{-1} \left[ \hbC_{h,0} + \sum_{s=1}^{h-1} \left( \hbC_{h,s} + \hbC_{h,s}^\top \right)  \right],\nonumber
\end{align}
and  $\hbC_{h,s} = (N-s)^{-1}  \sum_{t=1}^{N-s} x_t x_{t+s} \hat{\beps}_t \hat{\beps}_{t+s}^\top$, and $\hat{\beps}_t=\by_{t+h}-\hat{\bBeta}_n(h) x_t$ is the estimated forecast error vector.
\par
\noindent In Theorem~\ref{thm:MOME_case1} we prove that $\hat{\bmi}_h$ and $\hat{\bvi}_h$ are consistent estimators of ${\bmi}_h$ and ${\bvi}_h$, respectively. Theorem~\ref{thm:MOME_case1} relies upon  the following assumptions, that are less restrictive with respect to (C1) -- (C6). For further discussions on the assumptions see \cite[][Remark~1--3, p. 1073]{HSU2019}.
\begin{assumptions}
For each $0\leq s \leq h-1$, we assume the following:
	\begin{align*}
	(\text{A}1)	& \quad n^{-1} \sum_{t=1}^{n} \left( \beps_t^{(h)} \beps_t^{(h)\top}\right) = \E\left[  \beps_1^{(h)} \beps_1^{(h)\top} \right] + O_p \left(n^{-1/2}\right)\\
(\text{A}2)
		&\quad n^{-1} \sum_{t=1}^{n}  \left(x_t \beps_t^{(h)}\right) \left(x_{t+s}\beps_{t+s}^{(h)}\right)^\top = \boldsymbol{C}_{h,s} + o_p(1),\\
(\text{A}3)
		& \quad n ^{-1/2} \sum_{t=1}^{n} x_t \beps_t^{(h)} = O_p(1).\\
(\text{A}4)
		& \quad n ^{-1} \sum_{t=1}^{n} x_t^{2} = R + o_p(1),\\
(\text{A}5)
		& \quad \sup_{-\infty < t < \infty} \E \left[\left\|\beps_t^{(h)}\right\|^4\right] +
		\sup_{-\infty<t<\infty} \E\left[\abs {x_t}^4 \right] < \infty.			
	\end{align*}
\end{assumptions}

\begin{theorem}\label{thm:MOME_case1}
	If Assumptions (A1) -- (A5) hold, then for the case $w\geq2$, and $m=1$ we obtain:
	\begin{align*}
		\hat{\bmi}_h&= \bmi_h +\, O_p (n^{-1/2}),\\
		\hat{\bvi}_h&= \bvi_h +\, o_p (1).
	\end{align*}
\end{theorem}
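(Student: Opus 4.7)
The strategy is to write the fitted residual as $\hat{\beps}_t = \beps_t^{(h)} - \boldsymbol{\Delta}_n x_t$, where $\boldsymbol{\Delta}_n := \hat{\bBeta}_n(h) - \bBeta_h$ denotes the estimation error, and then to expand both $\hat{\beps}_t \hat{\beps}_t^\top$ and $\hat{\beps}_t \hat{\beps}_{t+s}^\top$ around their population analogues, controlling each remainder via (A1)--(A5). The first step is to establish $\boldsymbol{\Delta}_n = O_p(n^{-1/2})$: substituting $\by_{t+h} = \bBeta_h x_t + \beps_t^{(h)}$ into the OLS formula~(\ref{eqn:ols}) (specialised to $m=1$) yields $\boldsymbol{\Delta}_n = \hat R^{-1} \bigl(N^{-1}\sum_{t=1}^{N} x_t \beps_t^{(h)}\bigr)$. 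By (A4), $\hat R^{-1} = R^{-1} + o_p(1) = O_p(1)$, and by (A3), $N^{-1}\sum_t x_t \beps_t^{(h)} = O_p(n^{-1/2})$, whence the claimed rate.

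For the first claim, the expansion
\begin{equation*}
\hat{\beps}_t \hat{\beps}_t^\top = \beps_t^{(h)}\beps_t^{(h)\top} - \boldsymbol{\Delta}_n x_t \beps_t^{(h)\top} - \beps_t^{(h)} x_t \boldsymbol{\Delta}_n^\top + \boldsymbol{\Delta}_n x_t^2 \boldsymbol{\Delta}_n^\top,
\end{equation*}
combined with averaging over $t$ and invoking (A1), contributes the leading term $\bmi_h + O_p(n^{-1/2})$. The two cross terms factor as $\boldsymbol{\Delta}_n \cdot (N^{-1}\sum_t x_t \beps_t^{(h)\top}) = O_p(n^{-1/2}) \cdot O_p(n^{-1/2}) = O_p(n^{-1})$ by (A3), and the quadratic term is $\boldsymbol{\Delta}_n (N^{-1}\sum_t x_t^2) \boldsymbol{\Delta}_n^\top = O_p(n^{-1})$ by (A4). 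Hence $\hat{\bmi}_h = \bmi_h + O_p(n^{-1/2})$.

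For the variability estimator, by (A4) and continuous mapping $\hat R^{-1} = R^{-1} + o_p(1)$, so it suffices to show $\hbC_{h,s} = \bC_{h,s} + o_p(1)$ for every $0 \leq s \leq h-1$; Slutsky then yields the conclusion. The analogous expansion reads
\begin{equation*}
x_t x_{t+s} \hat{\beps}_t \hat{\beps}_{t+s}^\top = x_t x_{t+s} \beps_t^{(h)} \beps_{t+s}^{(h)\top} - x_t x_{t+s}^2 \beps_t^{(h)} \boldsymbol{\Delta}_n^\top - \boldsymbol{\Delta}_n x_t^2 x_{t+s} \beps_{t+s}^{(h)\top} + \boldsymbol{\Delta}_n x_t^2 x_{t+s}^2 \boldsymbol{\Delta}_n^\top,
\end{equation*}
and the leading average converges to $\bC_{h,s}$ in probability by (A2). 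The main obstacle will be controlling the three perturbation averages: since (A5) yields only fourth moments of $x_t$ and of $\beps_t^{(h)}$, no ready-made $L^2$ law of large numbers applies to the cubic-in-$x$ terms $x_t x_{t+s}^2 \beps_t^{(h)}$ or to the quartic term $x_t^2 x_{t+s}^2$. I would therefore invoke Hölder's inequality with exponents $(4,2,4)$ componentwise to obtain $\sup_t \E|x_t x_{t+s}^2 \eps_{t,j}^{(h)}| < \infty$, and Cauchy--Schwarz to obtain $\sup_t \E[x_t^2 x_{t+s}^2] < \infty$; Markov's inequality then delivers $(N-s)^{-1}\sum_t x_t x_{t+s}^2 \beps_t^{(h)} = O_p(1)$ and $(N-s)^{-1}\sum_t x_t^2 x_{t+s}^2 = O_p(1)$. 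Multiplying by $\boldsymbol{\Delta}_n = O_p(n^{-1/2})$ shows that the three perturbation terms are $O_p(n^{-1/2}) + O_p(n^{-1}) = o_p(1)$, giving $\hbC_{h,s} = \bC_{h,s} + o_p(1)$ and hence $\hat{\bvi}_h = \bvi_h + o_p(1)$.
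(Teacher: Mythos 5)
Your proposal is correct and follows essentially the same route as the paper: both establish $\hat{\bBeta}_n(h)-\bBeta_h=O_p(n^{-1/2})$ from (A3)--(A4), handle $\hat{\bmi}_h$ via (A1) plus the $O_p(n^{-1})$ quadratic-form remainder (the paper merely collapses your three perturbation terms into the single term $-\bigl(N^{-1}\sum_t x_t\beps_t\bigr)\hat R^{-1}\bigl(N^{-1}\sum_t x_t\beps_t\bigr)^\top$ using the normal equations first), and prove $\hbC_{h,s}=\bC_{h,s}+o_p(1)$ through the identical four-term expansion with H\"older/Markov moment bounds under (A5). No gaps.
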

\subsection{Asymptotic efficiency}\label{subsec:eff}
In this section we prove the asymptotic efficiency of the $\vmric$ in the fixed dimensionality framework. To this end, let $\mathcal{M}$ be the set of $K$ candidate models; each model is indicated either by $\ell$ or $\kappa$, $1\leq \ell,\kappa\leq K$. Define the subsets $M_1$ and $M_2$ as follows:
\begin{equation}
	M_1 = \left\{
	\kappa: 1 \leq \kappa \leq K,
	\norm{\bmi_h(\kappa)} = \min_{1\leq \ell \leq K}
	\norm{\bmi_h(\ell)}
	\right\}
\end{equation}
\begin{equation}
	M_2 = \left\{
	\kappa: \kappa \in M_1,
	\norm{\bvi_h(\kappa)} = \min_{\ell \in M_1}
	\norm{\bvi_h(\ell)}
	\right\}.
\end{equation}
In short, for a given forecast horizon $h$, $M_1$ contains the models with the minimum $\bmi_h$ whereas in $M_2$ we are minimizing $\bvi_h$ among the candidates models in $M_1$. The definition of efficiency used in our framework is the same as that of \cite{HSU2019}:
\begin{definition}\label{def:eff}
  Given a sample of size $n$, a model selection criterion is said to be asymptotically efficient if it selects the model $\hat{\ell}_h$ such that
  $$\lim_{n\to\infty}\Pr \left( \hat\ell_h \in M_2\right) = 1.$$
\end{definition}
\begin{remark}
Alternative definitions of asymptotic efficiency for model selection are available. For instance, in the framework of linear stationary processes, \cite{SHI1980} defines the Mean Efficiency when a criterion attains asymptotically a lower bound for the sum of squared prediction errors. Also, the notion of Approximate Efficiency is given in \cite{SHI1984}. In \cite{LI1987}, a criterion that depends upon the ratio between loss functions is introduced. This latter definition is similar to the Loss Efficiency proposed in \cite{SHA1997}.
\end{remark}
The $\vmric$ selects the model with the smallest variability index among those that achieve the best goodness of fit. Hence, the selected model $\hat\ell_h$  is such that:
\begin{equation}\label{eq:vmric}
	\vmric_h \left( \hat{\ell}_h\right) \equiv
	\min_{1 \leq \ell \leq K}
	\norm{ \hat{\boldsymbol{\mi}}_h (\ell) } +
	\min_{ \ell \in M_1}
	\norm{ \frac{C_n}{n} \hat{\boldsymbol{\vi}}_h (\ell)}.
\end{equation}
In the next Theorem we show that the $\vmric$ is an asymptotic efficient model selection criterion in the sense of Definition~\ref{def:eff}.
\begin{theorem}\label{thm:EFF_case1}
	Assume that for each $ 1 \leq \ell \leq K$, $0\leq s \leq h-1$, Theorem~\ref{thm:MOME_case1} holds and let $\hat\ell_h$ be the model selected by the $\vmric$. Then we have that:
	\begin{align*}
\label{eq:theo3}		
\lim_{n\rightarrow \infty} \Pr \left( \hat{\ell}_h \in M_2\right) &= 1,
	\end{align*}
namely, the $\vmric$ is asymptotically efficient in the sense of Definition~\ref{def:eff}.
\end{theorem}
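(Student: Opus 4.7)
The plan is to decompose the failure event $\{\hat\ell_h \notin M_2\}$ into the two sub-events $\{\hat\ell_h \notin M_1\}$ and $\{\hat\ell_h \in M_1 \setminus M_2\}$, and to show that each has vanishing probability via a pairwise comparison of $\hat\vmric_h$ against a fixed reference model $\ell^\star \in M_2$. Since the minimizer satisfies $\hat\vmric_h(\hat\ell_h) \le \hat\vmric_h(\ell^\star)$ by construction, it suffices to show that for every $\ell \notin M_2$ one has $\hat\vmric_h(\ell) > \hat\vmric_h(\ell^\star)$ with probability tending to one; finiteness of the candidate set $\mathcal{M}$ then permits a union bound. The two ingredients driving the argument are the consistency rates from Theorem~\ref{thm:MOME_case1}, transferred from matrices to their scalar norms via the reverse triangle inequality $|\norm{A}-\norm{B}|\le\norm{A-B}$,
\begin{equation*}
\norm{\hat\bmi_h(\ell)} = \norm{\bmi_h(\ell)} + O_p(n^{-1/2}), \qquad \norm{\hat\bvi_h(\ell)} = \norm{\bvi_h(\ell)} + o_p(1),
\end{equation*}
together with the calibration $\alpha_n/\sqrt{n}\to\infty$ and $\alpha_n/n\to 0$ from Eq.~(\ref{eqn:penalty}).

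\emph{Step 1 (selection lands in $M_1$).} For $\ell \notin M_1$, set $\delta_\ell := \norm{\bmi_h(\ell)} - \norm{\bmi_h(\ell^\star)} > 0$. Using the expansions above,
\begin{equation*}
\hat\vmric_h(\ell) - \hat\vmric_h(\ell^\star) = \delta_\ell + O_p(n^{-1/2}) + \frac{\alpha_n}{n}\bigl(\norm{\hat\bvi_h(\ell)}-\norm{\hat\bvi_h(\ell^\star)}\bigr),
\end{equation*}
and the last summand is $O_p(\alpha_n/n) = o_p(1)$. Hence the difference converges in probability to the strictly positive constant $\delta_\ell$, so $\Pr(\hat\vmric_h(\ell) > \hat\vmric_h(\ell^\star))\to 1$. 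A union bound over the finitely many $\ell \notin M_1$ yields $\Pr(\hat\ell_h \in M_1)\to 1$.

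\emph{Step 2 (selection lands in $M_2$).} For $\ell \in M_1 \setminus M_2$ the MI gap vanishes, so the MI contribution to the criterion difference collapses to the $O_p(n^{-1/2})$ sampling noise, whereas the VI gap $\gamma_\ell := \norm{\bvi_h(\ell)} - \norm{\bvi_h(\ell^\star)} > 0$ is a strict positive constant. Thus
\begin{equation*}
\hat\vmric_h(\ell) - \hat\vmric_h(\ell^\star) = O_p(n^{-1/2}) + \frac{\alpha_n}{n}\bigl(\gamma_\ell + o_p(1)\bigr),
\end{equation*}
and multiplying through by $n/\alpha_n$ produces $O_p(\sqrt{n}/\alpha_n) + \gamma_\ell + o_p(1) = \gamma_\ell + o_p(1)$, since $\sqrt{n}/\alpha_n \to 0$ by the first half of Eq.~(\ref{eqn:penalty}). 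Hence this difference too is positive with probability tending to one, and a second union bound over $M_1\setminus M_2$ combined with Step~1 delivers $\Pr(\hat\ell_h \in M_2)\to 1$.

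The principal subtlety worth flagging is the matrix-norm bookkeeping: Theorem~\ref{thm:MOME_case1} is stated for the matrix-valued estimators $\hat\bmi_h$ and $\hat\bvi_h$, whereas $\hat\vmric_h$ depends only on their scalar norms. The reverse triangle inequality applied separately to each term, rather than to the composite $\hat\vmric_h$, is what transfers the $O_p(n^{-1/2})$ and $o_p(1)$ rates cleanly onto the quantities that actually enter the argument. Once that transfer is in place, the remainder is the standard two-scale dominance argument familiar from the univariate $\mric$ proof in \cite{HSU2019}: the MI gap dominates at order one, while the VI gap, scaled by $\alpha_n/n$, dominates the $n^{-1/2}$ sampling noise precisely because $\alpha_n/\sqrt{n}\to\infty$.
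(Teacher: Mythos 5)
Your proof is correct and follows essentially the same two-stage dominance argument as the paper: the $O_p(1)$ MI gap forces the selection into $M_1$, and within $M_1$ the VI gap scaled by $\alpha_n/n$ dominates the $O_p(n^{-1/2})$ sampling noise because $\alpha_n/\sqrt{n}\to\infty$. If anything, your Step~2 (multiplying the criterion difference by $n/\alpha_n$ so that the residual MI noise becomes $O_p(\sqrt{n}/\alpha_n)=o_p(1)$) is more explicit than the paper's treatment, which asserts the cancellation of the MI terms directly.
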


\section{Example: a misspecified bivariate AR(2) model}\label{sec:example}

The aim of this section is twofold. First, we assess the goodness of the theoretical derivations and the finite sample behaviour of the method of moments estimator for the $\vmric$. Second, we show that in presence of misspecification the $\vmric$ leads to selecting the best predictive model (i.e. is asymptotically efficient) whereas both the $\aic$ and the $\bic$ fail to do so. In order to achieve the goals we consider a bivariate AR(2) DGP and use two misspecified predictive models for it: in Model 1 there is one omitted lagged predictor, whereas Model 2 uses only one non-informative predictor. We derive theoretically the Mean Square Prediction Error matrix and the $\vmric$ for both models and these show that Model 1 is a better predictive model over Model 2. Based on this, we assess the ability of the $\vmric$, and of the multivariate versions of the $\aic$ and $\bic$ to select the best model (Model 1) in finite samples and for different parameterizations.
\par
We start by providing the definition of misspecification. Consider an increasing sequence of $\sigma$-fields, $\left\{ \mathcal{G}_t \right\}$ such that $\sigma\left( \bx_s, s\leq t \right) \subseteq  \mathcal{G}_t  \subseteq \mathcal{F} $, where $\left\{ \bx_t \right\}$ is an $m$-dimensional weakly stationary process defined over the probability space $\left(\Omega, \mathcal{F}, \mathbb{P} \right)$.

\begin{definition}\label{def:miss}
	The $h$-step ahead forecasting model:
	\begin{equation}
		\by_{t+h} = \bBeta_h^\top \bx_t + \beps_t^{(h)},
	\end{equation}
	is correctly specified with respect to an increasing sequence of $\sigma$-fields, $\left\{ \mathcal{G}_t \right\}$ if
	\begin{equation}
		E\left[\by_{t+h} \mid \mathcal{G}_t \right] = \bBeta_h^\top \bx_t \ \text{a.s.}, \ \forall \ -\infty < t < \infty.
	\end{equation}
	Otherwise, it is misspecified.
\end{definition}

\begin{remark}\label{re:miss}
	The presence of misspecification implies that: $E\left[ \beps_t^{(h)} x_t \right] = \boldsymbol{0}$, while it is possible to have
$E\left[ \beps_t^{(h)} x_s \right] \neq \boldsymbol{0}$, for $s\neq t$, i.e. we have null simultaneous correlation and non-null cross correlation between the forecasting error vector and the regressor.
\end{remark}
Consider the following DGP :
\begin{equation}\label{eq:DGP}
	\by_{t+1} = \boldsymbol{a} w_t + \beps_{t+1},
\end{equation}
where $\boldsymbol{a}\neq \boldsymbol{0}$, $\left\{ \beps_t \right\}$ is a sequence of independent and identically distributed (hereafter i.i.d.) bivariate random vectors with $E\left[ \beps_1 \right] = \boldsymbol{0} $, $E\left[ \beps_1 \beps_1^{\top} \right] >\boldsymbol{0} $ and $w_t$ is the following scalar AR(2) process:
\begin{equation}\label{eq:AR2}
	w_t = \phi_1 w_{t-1} + \phi_2 w_{t-2} + \delta_t,
\end{equation}
where $\phi_1 \phi_2 \neq 0$ ,  $\left\{ \delta_t \right\}$ a sequence of i.i.d. random variables independent of $\left\{ \beps_t \right\}$ such that
$$E\left[ \delta_1 \right] = 0\quad\text{and}\quad E\left[ \delta_1^2 \right] = 1 - \phi_2^2 - \left\{ \phi_1^2 \frac{1+\phi_2}{1-\phi_2} \right\}.$$
Hence, we obtain $E\left[ w_t^2 \right] \equiv \gamma_w(0) = 1$, where $\gamma_w(j) = E\left[ w_t w_{t+j} \right]$ is the $j$-th lag autocovariance of $w$. \par
We consider the correctly specified $2$-step ahead forecasting model:
\begin{align}\label{eqn:fcst_mod_corr}
    \by_{t+2} &= \boldsymbol{a} w_{t+1} + \beps_{t+1}, \text{ which leads to } \nonumber\\
	\by_{t+2} &= \boldsymbol{a} \phi_1 w_t + \boldsymbol{a} \phi_2 w_{t-1} + \beps_t^{*(2)},
\end{align}
where $\beps_t^{*(2)} = \beps_{t+2} + \boldsymbol{a} \delta_{t+1}$. It can be easily proved that $E\left[ \beps_t^{*(2)} w_{t-j} \right] = \boldsymbol{0}$ for $j \geq 0$.
\par
Now, consider the following \textit{misspecified} model, Model 1:
\begin{align*}
	\by_{t+2}& = \bBeta w_t + \beps_t^{(2)},	\quad \text{with}\quad\bBeta = \frac{E\left[ \by_{t+2} w_t \right]}{V\left[ w_T \right]} = \boldsymbol{a} \left( \phi_1 + \frac{\phi_1 \phi_2}{1- \phi_2} \right).
\end{align*}
The forecasting error results:
\begin{equation}\label{eqn:fcst_error_miss}
	\beps_t^{(2)} = \beps_t^{*(2)} - \boldsymbol{a} \phi_2 \left[ \frac{\phi_1}{1-\phi_2} w_t - w_{t-1} \right].
\end{equation}
\par\noindent
\begin{remark}
  In presence of misspecification $E\left[ \beps_{t}^{(2)} w_{t} \right] = \boldsymbol{0}$, whereas $E\left[ \beps_{t}^{(2)} w_{t-j} \right] \neq \boldsymbol{0}$ for $j\neq0$. We show that this occurs in our case:
\begin{align}
E\left[ \beps_{t}^{(2)} w_{t-j} \right]& = -\boldsymbol{a} \frac{\phi_2}{1-\phi_2} \left\{ \phi_1 E\left[ w_t w_{t-j} \right]  - \left( 1- \phi_2 \right) E\left[ w_{t-1} w_{t-j} \right]  \right\}\nonumber\\
& = -\boldsymbol{a} \frac{\phi_2}{1-\phi_2} \left\{\gamma_w(j+1) - \gamma_w (j-1)\right\},\nonumber
\end{align}
which is zero if $j=0$, otherwise this is generally not the case.
\end{remark}
We compute the theoretical value of the $\vmric$ by using Eq.~(\ref{eqn:VMRIC}).
After some routine algebra, we get:
\begin{equation}\label{eqn:example_mi2}
	\bmi = E\left[
	\beps_{n}^{(2)}
	\beps_{n}^{(2)^\top}
	\right] = \boldsymbol{\sigma}^2_\eps +
	\boldsymbol{a} \boldsymbol{a}^\top
	\left[
	\sigma^2_{\delta}
	+ \phi_2^2
	\left(
	1 - \gamma_w^2 (1)
	\right)
	\right],
\end{equation}

which highlights how the variance-covariance matrix of the $2$-step ahead forecast vector is equal to the {DGP}'s variance-covariance plus a bias term that depends upon the misspecification considered.
\par
Now we focus on the variability index $\bvi$. We get
\begin{align}\label{eqn:example_c20}
	 \bC_{2,0} = \boldsymbol{\sigma}^2_{\beps} +
	\boldsymbol{a} \boldsymbol{a}^\top \left\{
	\sigma^2_{\delta}
	+  \phi^2_2 \left(
	\gamma_w(1)^2 E\left[w_t^4\right]- 2 \gamma_w(1)E\left[w_t^3 w_{t-1}\right] +
	 E\left[w_t^2 w_{t-1}^2\right]
	\right)
	\right\}
\end{align}
and
\begin{align}\label{eqn:example_c21}
\bC_{2,1} =
\boldsymbol{a} \boldsymbol{a}^\top \gamma_w(1)\left(
b_1	E\left[w_{t-1}^3 w_{t-2}\right]  +
b_2	E\left[w_{t-1} w_{t-2}^3 \right] +
b_3 E\left[w_{t-1}^2 w_{t-2}^2\right]
\right),
\end{align}
where
\begin{align*}
	b_1= 2\phi_1\phi_2\gamma_w(1) - \phi_2,\qquad
	b_2= -\phi_2^2, \qquad
	b_3= \phi_2 \left(\phi_2\gamma_w(1) - 2\phi_1 + \gamma_w(1)^{-1} \right).
\end{align*}

Following Eq.~(\ref{eqn:VMRIC}), the results from Eq.~(\ref{eqn:example_mi2}), (\ref{eqn:example_c20}), and (\ref{eqn:example_c21}), deliver the $\vmric$ for this case.
\par
Now we consider a second misspecified model, Model 2:
\begin{equation}
\by_{t+2} = \bRho z_t + \bEta_t^{(2)},
\end{equation}
where $z_t$ is a weakly stationary linear AR(1) process independent of $w_t$:
\begin{equation}\label{eq:AR1}
z_t = \psi_1 z_{t-1} + \upsilon_t
\end{equation}
with $\psi_1 \in (-1, 1)$, and $\left\{ \upsilon_t \right\}$ is a sequence of i.i.d. random variables independent of both the error terms  $\left\{ \delta_t\right\}$ and $\left\{\beps_{t}\right\}$ such that $E[\upsilon_t] = 0$ and $E[\upsilon_t^2] = 1 - \psi_1^2$, delivering $E[z_t] = 0$ and $E[z_t^2]=1$. Thus, $z_t$ is uncorrelated with both $w_t$ and $\by_t$, therefore $\bRho=\boldsymbol{0}$. The forecasting error in this case results $\bEta_t^{(2)} = \boldsymbol{a} w_{t+1} + \beps_{t+2}$. Following similar arguments as above we obtain ${\bmi}$ and ${\bvi}$ for Model 2:
\begin{align}
{\bmi} &= \boldsymbol{\sigma}^2_{\beps} + \boldsymbol{a} \boldsymbol{a}^\top  \\
{\bvi} &= \boldsymbol{\sigma}^2_{\beps} + \boldsymbol{a} \boldsymbol{a}^\top (1 + 2 \psi_1 \gamma_w(1))
\end{align}
\par
As mentioned above, Model 1 is misspecified since it omits the lagged predictor $w_{t-1}$, while Model 2 only includes the non-informative predictor $z_t$.

\subsection{Finite sample performance}

First, we compare the above theoretical derivations with their sample counterpart. We consider three different parameterizations, presented in Table~\ref{tab:1}. Also, $\alpha_n = n^\alpha$ with $\alpha=0.85$. Note that, in order for Eq.~(\ref{eqn:penalty}) to hold, $\alpha$ must range in  $(0, 1)$. Further experiments showed that results are fairly robust if reasonable values of $\alpha$ are selected. For an empirical method to determine it, see \cite[Section 5]{hsu2019_supp}. We take the following variance/covariance matrix for the innovations:
$$E[\beps_t\beps_t^{\top}] =
\left[\begin{array}{cc}
	1 & 0.5 \\
	0.5 & 1
\end{array}\right].$$
\par
We compute both the {$\vmric$} for Model 1 and Model 2, and estimate the $\hat{\vmric}$ and $\hat{\vmric}$ on a large sample of $n=10^6$ observations. The results are shown in Table~\ref{tab:2} for the two models, where the theoretical $\vmric$ (rows 1 and 3) is compared with the estimated one (rows 2 and 4). The results seem to confirm the consistency of the estimator shown in Eq.~(\ref{eqn:VMRIC_hat}). Clearly, the $\vmric$ of Model 1 is consistently smaller than that of Model 2 and indicates its superior predictive capability.
\begin{table}[]
\centering	\caption{Parameters' combinations for the DGP of Eq.~(\ref{eq:DGP}),  (\ref{eq:AR2}), and (\ref{eq:AR1}).}\label{tab:1}
	\begin{tabular}{crrrrr}
		Case & $\phi_1$ & $\phi_2$ & $a_1$ & $a_2$ & $\psi_1$ \\
         \cmidrule(lr){2-6}
		1 &  0.4  & -0.75 &  1.50 & -2.00 &  0.80   			   \\
		2 & -0.4  & -0.45 & -0.75 & 1.25  & -0.65  			   \\
		3 &  0.3  & -0.80 &  1.00 & 0.50  & -0.75  			   \\
         \cmidrule(lr){2-6}
	\end{tabular}
\end{table}
\begin{table}[]
\centering	\caption{Theoretical and estimated {$\vmric$} of Models 1 and 2, for the three parameterizations of Table~\ref{tab:1}, computed on a data set of $n=10^6$ observations.}
	\label{tab:2}
	\begin{tabular}{ccccc}
		& \multicolumn{2}{c}{Model 1} & \multicolumn{2}{c}{Model 2} \\
        \cmidrule(lr){2-3}\cmidrule(lr){4-5}
Case & $\vmric$ & $\hat{\vmric}$ & $\vmric$ & $\hat{\vmric}$ \\
        \cmidrule(lr){2-3}\cmidrule(lr){4-5}
		1  & 6.671  & 6.636  & 7.914 & 7.902 \\
		2  & 2.777  & 2.768  & 3.164 & 3.168 \\
		3  & 2.801  & 2.784  & 2.994 & 2.993 \\
        \cmidrule(lr){2-3}\cmidrule(lr){4-5}
	\end{tabular}
\end{table}
The finite sample behaviour of the method of moments estimator of the $\vmric$ can be further appreciated in Table~\ref{tab:3} where we show their bias and Mean Squared Error (MSE).
The results are based upon 1000 Monte Carlo replications and seem to indicate a rate of convergence of the order of $n^{-1}$.
\par
In Table~\ref{tab:4}, we show the percentages of correct model selection by the $\vmric$, compared with the multivariate version of the $\aic$ and $\bic$ for the three parameterizations of Table~\ref{tab:1}. For a sample size of $n=100$, both the $\aic$ and $\bic$ select the best predictive model in about 50\% of the cases and relying upon them is tantamount to tossing a fair coin. In such a case, the $\vmric$ selects the correct model in about $80\%$ of the cases and reaches $100\%$ for $n=1000$. On the contrary, for Case 3,  both the $\aic$ and $\bic$ cannot go above $64\%$ for a sample size as large as $n=10000$ observations and this is a general indication of their lack of asymptotic efficiency.

\begin{table}[]
\centering
	\caption{Bias and Mean-Squared Error (MSE) for the (method of moments) estimator of the $\vmric$ for the three parameterizations, $\alpha=0.85$ and different sample size $n$. The results are based upon $1000$ Monte Carlo replications.}
	\label{tab:3}
	\begin{tabular}{rcccccc}
 & \multicolumn{2}{c}{Case 1} & \multicolumn{2}{c}{Case 2} & \multicolumn{2}{c}{Case 3} \\
 \cmidrule(lr){2-3}\cmidrule(lr){4-5} \cmidrule(lr){6-7}
        $n$     & Bias  & MSE   & Bias  & MSE   & Bias  & MSE \\
 \cmidrule(lr){2-3}\cmidrule(lr){4-5} \cmidrule(lr){6-7}
		100   & 0.227 & 1.137 & 0.063 & 0.306 & 0.030 & 0.182 \\
		250   & 0.117 & 0.455 & 0.022 & 0.107 & 0.032 & 0.076 \\
		500   & 0.061 & 0.225 & 0.015 & 0.048 & 0.004 & 0.032 \\
		1000  & 0.019 & 0.109 & 0.010 & 0.023 & 0.002 & 0.015 \\
		2500  & 0.008 & 0.044 & 0.001 & 0.009 & 0.001 & 0.006 \\
		5000  & 0.009 & 0.023 & 0.001 & 0.004 & 0.003 & 0.003 \\
		10000 & 0.001 & 0.012 & 0.003 & 0.002 & 0.001 & 0.002 \\
		15000 & 0.004 & 0.008 & 0.001 & 0.001 & 0.002 & 0.001 \\
		30000 & 0.002 & 0.004 & 0.001 & 0.001 & 0.001 & 0.001 \\
  \cmidrule(lr){2-3}\cmidrule(lr){4-5} \cmidrule(lr){6-7}
	\end{tabular}
\end{table}
\begin{table}[]
\centering
\caption{Percentages of correctly selected models by the three information criteria for the three parameterizations and varying sample size $n$.}\label{tab:4}
\small			
\begin{tabular}{rccccccccc} 
&\multicolumn{3}{c}{Case 1} & \multicolumn{3}{c}{Case 2} & \multicolumn{3}{c}{Case 3} \\
\cmidrule(lr){2-4}\cmidrule(lr){5-7} \cmidrule(lr){8-10}	
$n$     & VMRIC & AIC & BIC & VMRIC & AIC & BIC &VMRIC & AIC & BIC \\
\cmidrule(lr){2-4}\cmidrule(lr){5-7} \cmidrule(lr){8-10}	
100   & 85.9 & 52.5 & 52.5 & 84.6 & 56.2 & 56.2 & 72.1 & 49.0 & 49.0  \\
1000  & 99.9 & 65.6 & 65.6 & 99.9 & 73.7 & 73.7 & 97.0 & 56.8 & 56.8 \\
10000 & 100  & 88.0 & 88.0 & 100  & 97.8 & 97.8 & 100  & 63.8 & 63.8 \\
\cmidrule(lr){2-4}\cmidrule(lr){5-7} \cmidrule(lr){8-10}	
\end{tabular}
\end{table}
\section{Proofs}\label{sec:proof}

In this section we detail the proofs of the three theorems. Hereafter all the derivations hold for any fixed $h\geq 1$; for the sake of presentation we write $ \beps_{t}$ instead of  $ \beps_{t}^{(h)}$.
 Remember that  $\{l_n\}$ indicates an increasing sequence  of positive integers such that:
\begin{equation}\label{eqn:ln}
  l_n \rightarrow \infty,\qquad \dfrac{l_n}{\sqrt{n}}=o\left(1\right)
\end{equation}
and define $a = n-l_n-h$ and $b=n-l_n-h+1$.

\subsection{Proof of Theorem~\ref{thm:MSPE_dec_case1}}\label{sec:A1}

The proof of Theorem~\ref{thm:MSPE_dec_case1} relies upon four propositions.

\begin{proposition}\label{prop:thm1_1}
	Under assumptions of Theorem~\ref{thm:MSPE_dec_case1}, it holds that:
	\begin{equation}
		N(\mathrm{I}) =   (\mathrm{III}) + o(1),\label{eqn:thm1_1}
	\end{equation}
	where
	\begin{align*}
		(\mathrm{I}) =- \E\left[x_n \hat{R}^{-1} \left( \boldsymbol{\hat{\Sigma}} \boldsymbol{\eps}_{n}^\top + \boldsymbol{\eps}_{n} \boldsymbol{\hat{\Sigma}}^\top \right) \right],\quad
		(\mathrm{III})= \ - \E\left[x_n R^{-1} \left( \boldsymbol{\hat{\Sigma}}_A \boldsymbol{\eps}_{n}^\top + \boldsymbol{\eps}_{n} \boldsymbol{\hat{\Sigma}}_A^\top\right) \right],
	\end{align*}
	with $\boldsymbol{\hat{\Sigma}} = \left( N^{-1} \sum_{t=1}^{N}x_t \boldsymbol{\eps}_{t}  \right)$ and  $\boldsymbol{\hat{\Sigma}}_A = \sum_{t=1}^{N}\boldsymbol{\varepsilon_{t}} x_t$.
\end{proposition}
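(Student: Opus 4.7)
The object to analyze is the scalar gap $\hat R^{-1}-R^{-1}$, since, after using $\boldsymbol{\hat\Sigma}_A = N\boldsymbol{\hat\Sigma}$, one gets
\[
N(\mathrm{I})-(\mathrm{III}) = -\E\bigl[x_n\bigl(\hat R^{-1}-R^{-1}\bigr)\bigl(\boldsymbol{\hat\Sigma}_A\beps_n^\top + \beps_n\boldsymbol{\hat\Sigma}_A^\top\bigr)\bigr],
\]
and the claim amounts to showing that this expectation is $o(1)$. I would expand $\hat R^{-1}-R^{-1} = -R^{-2}(\hat R-R) + R^{-2}(\hat R-R)^2\hat R^{-1}$ and treat the two pieces separately. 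The quadratic remainder is routine: combining $(\hat R-R)^2=O_{L^{q_1/2}}(N^{-1})$ from (C1), $\hat R^{-1}=O_{L^q}(1)$ from (C5), $\boldsymbol{\hat\Sigma}_A = O_{L^5}(N^{1/2})$ from (C4), and the $L^{10}$, $L^6$ bounds from (C3), a H\"older estimate yields $O(N^{-1/2})=o(1)$. By symmetry only the first-order term $\E[x_n(\hat R-R)\boldsymbol{\hat\Sigma}_A\beps_n^\top]$ needs to be analysed.

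The first thing to notice is that naive H\"older on this term gives only $O(1)$, because the $O(N^{-1/2})$ from $\hat R-R$ and the $O(N^{1/2})$ from $\boldsymbol{\hat\Sigma}_A$ cancel exactly; the missing $o(1)$ factor has to be produced structurally, from the orthogonality $\E[x_n\beps_n]=\boldsymbol{0}$ reinforced by the weak-dependence statement (C6.3). To expose this cancellation I would truncate at the index $a=n-l_n-h$ supplied in the preamble and split
\[
\hat R-R = (\hat R-R)^{(\mathrm{f})}+(\hat R-R)^{(\mathrm{n})},\qquad \boldsymbol{\hat\Sigma}_A = \boldsymbol{\hat\Sigma}_A^{(\mathrm{f})}+\boldsymbol{\hat\Sigma}_A^{(\mathrm{n})},
\]
where the ``far'' sums run over $1\le t\le a$ and the ``near'' sums contain only $l_n=o(\sqrt n)$ summands. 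Since the near pieces are short, stationary-type moment bounds give $\|(\hat R-R)^{(\mathrm{n})}\|_{L^{q_1}}=O(\sqrt{l_n}/N)$ and $\|\boldsymbol{\hat\Sigma}_A^{(\mathrm{n})}\|_{L^5}=O(\sqrt{l_n})$, so every cross product involving at least one near factor is H\"older-bounded by $O(\sqrt{l_n/N})=o(1)$ without any extra input.

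The remaining far--far contribution $\E[x_n(\hat R-R)^{(\mathrm{f})}\boldsymbol{\hat\Sigma}_A^{(\mathrm{f})}\beps_n^\top]$ is the heart of the argument. Both far factors are $\mathcal F_a$-measurable, so the tower property rewrites this as $\E[(\hat R-R)^{(\mathrm{f})}\boldsymbol{\hat\Sigma}_A^{(\mathrm{f})}\,\E[x_n\beps_n^\top\mid\mathcal F_a]]$, and by (C6.3) the conditional expectation satisfies $\|\E[x_n\beps_n^\top\mid\mathcal F_a]\|_{L^3}=o(1)$ as $l_n\to\infty$. A final H\"older pass, using (C1) and (C4) for the two far factors, then delivers $O(N^{-1/2})\cdot O(N^{1/2})\cdot o(1)=o(1)$; the symmetric term in $\beps_n\boldsymbol{\hat\Sigma}_A^\top$ is treated identically.

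The hard step is precisely this last one: without the conditional-expectation decay in (C6.3) the estimate stalls at $O(1)$, so any genuine proof must engage weak dependence rather than relying on moment bounds alone. Two small technical points require care, namely that the far sums are adapted to $\mathcal F_a$ (implicitly, $\beps_t$ must be $\mathcal F_t$-measurable, which is inherited from the univariate framework in \cite{HSU2019}), and that the H\"older exponents are chosen so that the moment orders $q_1>5$, $10$, $6$, $5$, $3$ from (C1), (C3.1), (C3.2), (C4), (C6.3) close the estimate cleanly; this is where each of those strict inequalities is actually consumed.
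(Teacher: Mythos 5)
Your proof is correct and reaches the paper's conclusion by a genuinely parallel but not identical route. Where you linearize the inverse via the resolvent identity $\hat R^{-1}-R^{-1}=-R^{-2}(\hat R-R)+R^{-2}(\hat R-R)^2\hat R^{-1}$ and then split \emph{both} $\hat R-R$ and $\boldsymbol{\hat\Sigma}_A$ into far/near pieces at $a=n-l_n-h$, the paper instead interposes the truncated estimator $\tilde R=(n-l_n)^{-1}\sum_{t=1}^{n-l_n}x_t^2$, writes $\hat R^{-1}-R^{-1}=(\hat R^{-1}-\tilde R^{-1})+(\tilde R^{-1}-R^{-1})$, controls the two inverse differences directly through Lemma~\ref{lma:technical} (giving $O(l_n/n)$ and $O(n^{-1/2})$ respectively), and splits only the sum $\boldsymbol{\hat\Sigma}_A$ into $[1,a]$ and $[b,N]$ for the $\tilde R^{-1}-R^{-1}$ piece. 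The decisive step is the same in both arguments and you identify it correctly: the far--far (in the paper, the $(\tilde R^{-1}-R^{-1})\times\sum_{t=1}^a$) contribution is stuck at $O(1)$ under naive H\"older, and the extra $o(1)$ factor comes from conditioning on $\mathcal F_{n-l_n}$ and invoking (C6.3) so that $\E[\norm{\E[\beps_n x_n\mid\mathcal F_{n-l_n}]}^3]^{1/3}=o(1)$ multiplies $O(n^{-1/2})\cdot O(n^{1/2})$. Your route trades Lemma~\ref{lma:technical}'s bound on $\hat R^{-1}-\tilde R^{-1}$ for a quadratic remainder term and a second near/far bookkeeping step; the paper's route keeps everything at the level of inverses and is slightly leaner. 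The measurability caveat you raise (that $\sum_{t=1}^{a}\beps_t x_t$ be measurable with respect to the conditioning $\sigma$-field, which requires $\mathcal F_t$ to contain the relevant $y$-history and is the reason for the offset $h$ in $a=n-l_n-h$) is real; the paper leaves it equally implicit, since (C6) only states measurability of $x_t$, so you are not missing anything the paper supplies.
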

\begin{proof}
Let $\boldsymbol{A}_1  = \sum_{t=1}^{N} \left(\beps_t x_t\right) \beps_n^\top$ and note that
\begin{align}
	\label{I-IIIdiv} \norm{(\mathrm{I})-(\mathrm{III}) }
	&= \norm{\E\left[x_n \left(\hat{R}^{-1}-R^{-1}\right)\left(\boldsymbol{A}_1+\boldsymbol{A}_1^\top\right)\right] }.
\end{align}
By using standard properties of the norm, (\ref{eqn:thm1_1}) follows upon proving that
\begin{equation}\label{eqn:thm1_1main}
 \norm{ \E\left[x_n \left(\hat{R}^{-1}-R^{-1}\right)\boldsymbol{A}_1^\top\right]}=o(1).
\end{equation}
Let
\begin{equation}\label{eqn:R_tilde}
\tilde{R} = \left(n-l_n\right)^{-1} \sum_{t=1}^{n-l_n} x_t^2.
\end{equation}
By adding and subtracting  $ \beps_n x_n \left(\tilde{R}^{-1} \left[\sum_{t=1}^{N}\left(\beps_t x_t\right)\right]^\top \right)$, we have
\begin{align}
 \E\left[x_n \left(\hat{R}^{-1}-R^{-1}\right)\boldsymbol{A}_1^\top\right]&=\E\left[\beps_n x_n \left(\hat{R}^{-1}-\tilde{R}^{-1}\right) \sum_{t=1}^{N} \beps_t^\top x_t\right]\nonumber\\
 &+ \E\left[\beps_n x_n \left(\tilde{R}^{-1}-R^{-1}\right) \sum_{t=1}^{N} \beps_t^\top x_t\right]\nonumber\\
 &= \E\left[\beps_n x_n \left(\hat{R}^{-1}-\tilde{R}^{-1}\right) \left(\sum_{t=1}^{N} \beps_t x_t\right)^\top\right]\label{eqn:1}\\
 &+ \E\left[\beps_n x_n \left(\tilde{R}^{-1}-R^{-1}\right) \left(\sum_{t=b}^{N} \beps_t x_t\right)^\top\right]\label{eqn:2}\\
 &+ \E\left[\beps_n x_n \left(\tilde{R}^{-1}-R^{-1}\right) \left(\sum_{t=1}^{a} \beps_t x_t\right)^\top\right]\label{eqn:3}.
\end{align}
We show below that the norms of (\ref{eqn:1}),  (\ref{eqn:2}) and (\ref{eqn:3}) are asymptotically negligible. Focus on the first one: by combining   conditions (C3), (C4), Lemma \ref{lma:technical}, and H\"older's inequality, it follows that $\norm{(\ref{eqn:1})}$ is bounded by
\begin{align*}
\E \left[\norm{\beps_n x_n \left(\hat{R}^{-1}-\tilde{R}^{-1}\right) \left(\sum_{t=1}^{N} \beps_t x_t\right)^\top}\right]
&\leq \E\left[\norm{\beps_n}^6\right]^{\frac{1}{6}} \E\left[\left\lvert x_n \right\rvert^{6} \right]^{\frac{1}{6}}  \E\left[\left\lvert \hat{R}^{-1}-\tilde{R}^{-1} \right\rvert^{3} \right]^{\frac{1}{3}}\\
& \times \E\left[\norm{ N^{\frac{1}{2}}N^{-\frac{1}{2}}\sum_{t=1}^{N} \beps_t x_t } ^{3}\right]^{\frac{1}{3}} = O\left(\frac{l_n}{n^{1/2}}\right),
\end{align*}
which converges to zero due to the definition of $l_n$ in (\ref{eqn:ln}). Similarly, we have that $\norm{(\ref{eqn:2})}$ is bounded by
\begin{align*}
	\E\left[\norm{\beps_n}^6\right]^\frac{1}{6}	\E\left[\left\lvert x_n\right\rvert^6\right]^\frac{1}{6} 	\E\left[\left\lvert\tilde{R}^{-1}-R^{-1}\right\rvert^3\right]^\frac{1}{3} \E\left[\norm{\left(\left(N-b+1\right)^{\frac{1}{2}}\left(N-b+1\right)^{-\frac{1}{2}}\sum_{t=b}^{N} \beps_t x_t\right)^\top}^3\right]^\frac{1}{3}.
\end{align*}
which is an $O\left(n^{-1/2}l_n\right)$ thereby vanishing asymptotically. Lastly, Condition (C6), Lemma \ref{lma:technical}, and H\"older's inequality imply that $\norm{(\ref{eqn:2})}$ is bounded by
\begin{align*}
 \E\left[\norm{\E\left[ \beps_t x_t \mid \mathcal{F}_{t-l_n}\right]}^3\right]^\frac{1}{3}  \E\left[\left\lvert\tilde{R}^{-1}-R^{-1}\right\rvert^{3}\right]^\frac{1}{3}  \E\left[\norm{a^{\frac{1}{2}} a^{-\frac{1}{2}}\sum_{t=1}^{a} \beps_t^\top x_t}^3\right]^\frac{1}{3}= o\left(1\right)
\end{align*}
and this completes the proof.
\end{proof}
\begin{proposition}\label{prop:thm1_2}
	Under assumptions of Theorem~\ref{thm:MSPE_dec_case1}, it holds that:
\begin{equation}\label{eqn:thm1_2}
		N(\mathrm{II}) =  (\mathrm{IV}) + o(1),
	\end{equation}
where
	\begin{align*}
		(\mathrm{II}) =  \E\left[\hat{R}^{-1} \boldsymbol{\hat{\Sigma}}x_n x_n \boldsymbol{\hat{\Sigma}}^\top \hat{R}^{-1}\right],\quad
		(\mathrm{IV}) =  \ \E\left[ \boldsymbol{\hat{\Sigma}}_B R^{-1} \boldsymbol{\hat{\Sigma}}_B^\top \right],
	\end{align*}
	with $\boldsymbol{\hat{\Sigma}}$ being defined in Proposition~\ref{prop:thm1_1} and $\boldsymbol{\hat{\Sigma}}_B = N^{-\frac{1}{2}} \sum_{t=1}^{N}\boldsymbol{\varepsilon_{t}} x_t.$
\end{proposition}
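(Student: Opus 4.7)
The plan is to exploit the fact that for $m=1$ the quantities $x_n$, $\hat{R}$, and $R$ are scalars and commute freely with the matrix-valued factors. First I would substitute $\boldsymbol{\hat{\Sigma}} = N^{-1/2} \boldsymbol{\hat{\Sigma}}_B$ to obtain
\begin{align*}
N(\mathrm{II}) = \E\bigl[\, \hat{R}^{-2}\, x_n^2\, \boldsymbol{\hat{\Sigma}}_B \boldsymbol{\hat{\Sigma}}_B^\top \,\bigr], \qquad
(\mathrm{IV}) = R^{-1}\,\E\bigl[\, \boldsymbol{\hat{\Sigma}}_B \boldsymbol{\hat{\Sigma}}_B^\top \,\bigr],
\end{align*}
so the problem reduces to showing $\E\bigl[(\hat{R}^{-2} x_n^2 - R^{-1})\,\boldsymbol{\hat{\Sigma}}_B \boldsymbol{\hat{\Sigma}}_B^\top\bigr] = o(1)$ in matrix norm. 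I would follow the telescoping strategy of Proposition~\ref{prop:thm1_1}, introducing the auxiliary quantities $\tilde{R}$ defined in (\ref{eqn:R_tilde}) and the truncated sum $\boldsymbol{\hat{\Sigma}}_{B,1} = N^{-1/2}\sum_{t=1}^{a} \beps_t x_t$, and perform the replacement one ingredient at a time.

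The first two swaps are purely moment-based. To replace $\hat{R}^{-2}$ by $\tilde{R}^{-2}$, I would use the identity $\hat{R}^{-2} - \tilde{R}^{-2} = (\hat{R}^{-1} + \tilde{R}^{-1})(\hat{R}^{-1} - \tilde{R}^{-1})$, control $\hat{R}^{-1} - \tilde{R}^{-1}$ via Lemma~\ref{lma:technical}, and apply H\"older's inequality using the tenth-moment bound on $x_t$ from (C3.1), the fifth-moment bound on $\boldsymbol{\hat{\Sigma}}_B$ implied by (C4), and the arbitrary-order negative moments of $\hat{R}$ from (C5); this yields an $O(l_n/\sqrt{n})$ contribution. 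To replace $\boldsymbol{\hat{\Sigma}}_B \boldsymbol{\hat{\Sigma}}_B^\top$ by $\boldsymbol{\hat{\Sigma}}_{B,1} \boldsymbol{\hat{\Sigma}}_{B,1}^\top$, I would write $\boldsymbol{\hat{\Sigma}}_B = \boldsymbol{\hat{\Sigma}}_{B,1} + \boldsymbol{\hat{\Sigma}}_{B,2}$ with $\boldsymbol{\hat{\Sigma}}_{B,2} = N^{-1/2}\sum_{t=a+1}^{N}\beps_t x_t$. Because $\boldsymbol{\hat{\Sigma}}_{B,2}$ involves only $l_n + h$ summands, (C4) gives $\E[\|\boldsymbol{\hat{\Sigma}}_{B,2}\|^5] = O\bigl((l_n/n)^{5/2}\bigr)$, and the three cross-terms are then $o(1)$ by H\"older combined with the same moment bounds used above.

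The decisive step is the replacement of $x_n^2$ by $R$. Both $\tilde{R}$ and $\boldsymbol{\hat{\Sigma}}_{B,1}$ are $\mathcal{F}_{n-l_n}$-measurable (this is where (C6.1) and the choice of truncation level $a = n - l_n - h$ pay off), so I would condition on $\mathcal{F}_{n-l_n}$ and invoke (C6.2):
\begin{align*}
\E\bigl[\tilde{R}^{-2}\, x_n^2\, \boldsymbol{\hat{\Sigma}}_{B,1}\boldsymbol{\hat{\Sigma}}_{B,1}^\top\bigr]
= \E\bigl[\tilde{R}^{-2}\, \boldsymbol{\hat{\Sigma}}_{B,1}\boldsymbol{\hat{\Sigma}}_{B,1}^\top \,\E[x_n^2 \mid \mathcal{F}_{n-l_n}]\bigr],
\end{align*}
and the residual $\E[x_n^2 \mid \mathcal{F}_{n-l_n}] - R$ is $o(1)$ in $L^3$ by (C6.2), so H\"older (matched against the arbitrary moments of $\tilde{R}^{-1}$ and the available moments of $\boldsymbol{\hat{\Sigma}}_{B,1}$) absorbs it. The expression becomes $R\,\E[\tilde{R}^{-2}\,\boldsymbol{\hat{\Sigma}}_{B,1}\boldsymbol{\hat{\Sigma}}_{B,1}^\top]$; swapping $\tilde{R}^{-2}$ back to $R^{-2}$ via (C1), and restoring $\boldsymbol{\hat{\Sigma}}_{B,1}\boldsymbol{\hat{\Sigma}}_{B,1}^\top$ to $\boldsymbol{\hat{\Sigma}}_B\boldsymbol{\hat{\Sigma}}_B^\top$ by a mirror of the earlier argument, collapses the expression to $R^{-1}\,\E[\boldsymbol{\hat{\Sigma}}_B\boldsymbol{\hat{\Sigma}}_B^\top] + o(1) = (\mathrm{IV}) + o(1)$. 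The chief obstacle is the bookkeeping of H\"older exponents: at each telescoping step one must simultaneously accommodate the sixth-moment bound on $\beps_t$, the tenth-moment bound on $x_t$, the arbitrary negative moments of $\hat{R}$, and the $L^3$ weak-dependence bound of (C6.2), so that the exponents add to one while every factor stays bounded---a careful but otherwise routine computation.
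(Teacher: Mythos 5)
Your proposal is correct and follows essentially the same route as the paper: both arguments reduce the claim to controlling $\E\left[\left(\hat{R}^{-2}x_n^2-R^{-1}\right)\boldsymbol{\hat{\Sigma}}_B\boldsymbol{\hat{\Sigma}}_B^\top\right]$, replace $\hat{R}^{-1}$ by $R^{-1}$ through the moment bounds of Lemma~\ref{lma:technical} combined with (C3)--(C5) and H\"older, truncate the sum at $a=n-l_n-h$, and condition on $\mathcal{F}_{n-l_n}$ so that (C6.2) absorbs $\E\left[x_n^2\mid\mathcal{F}_{n-l_n}\right]-R$. The only difference is organizational---you telescope the factors one at a time whereas the paper expands the additive decomposition $\boldsymbol{M}_1+\boldsymbol{M}_2$ with $\boldsymbol{M}_1=x_n\left(\hat{R}^{-1}-R^{-1}\right)\boldsymbol{\hat{\Sigma}}_B$---and, if anything, your explicit disposal of the tail block $\boldsymbol{\hat{\Sigma}}_{B,2}$ \emph{before} invoking iterated expectations is the cleaner way to justify that step.
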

\begin{proof}
 Let $\boldsymbol{M}_1 = x_n \left(\hat{R}^{-1} - R^{-1}\right) \boldsymbol{\hat{\Sigma}}_B$ and $\boldsymbol{M}_2 = x_n R^{-1} \boldsymbol{\hat{\Sigma}}_B$. Since
\begin{align*}
	N \left(\mathrm{II}\right)= \E\left[\left(\boldsymbol{M}_1+ \boldsymbol{M}_2\right) \left(\boldsymbol{M}_1+ \boldsymbol{M}_2\right)^\top\right]
	&= \E\left[\boldsymbol{M}_1 \boldsymbol{M}_1^\top\right] + \E\left[\boldsymbol{M}_2\boldsymbol{M}_2^\top\right]\\
& + \E\left[\boldsymbol{M}_1 \boldsymbol{M}_2^\top\right] + \E\left[\boldsymbol{M}_2 \boldsymbol{M}_1^\top\right]
\end{align*}
the proof of (\ref{eqn:thm1_2}) reduces to show that the following conditions hold:
\begin{align}
\norm{\E\left[ \boldsymbol{M}_1 \boldsymbol{M}_1^\top \right]} &= o\left(1\right),\label{prop:cond1}\\
\norm{\E\left[ \boldsymbol{M}_1 \boldsymbol{M}_2^\top \right]} &= o\left(1\right),\label{prop:cond2} \\
\norm{\E\left[ \boldsymbol{M}_2 \boldsymbol{M}_2^\top \right]- (\mathrm{IV})}&= o\left(1\right).\label{prop:cond3}
\end{align}
Conditions (\ref{prop:cond1}) and (\ref{prop:cond2}) readily follow from Assumptions (C3) and (C4), Lemma \ref{lma:technical}, the non singularity of $R$ and H\"older's inequality:
\begin{align*}
	\E\left[\norm{\boldsymbol{M}_1 \boldsymbol{M}_1^\top }\right] &=  \E\left[\norm{x_n^2 \left(\hat{R}^{-1} - R^{-1}\right)^2 \boldsymbol{\hat{\Sigma}}_B \boldsymbol{\hat{\Sigma}}_B^\top }\right]
   \leq   \left(\E\left[\left\lvert	x_n	\right\rvert^{10}\right]\right)^\frac{1}{5}  \left(\E\left[\left\lvert\hat{R}^{-1}-R^{-1}\right\rvert^5\right]\right)^\frac{2}{5}\\  &\times\left(\E\left[\norm{\boldsymbol{\hat{\Sigma}}_B}^5\right]\right)^\frac{2}{5}= o\left(1\right);\\
   	\E\left[ \norm{\boldsymbol{M}_1\boldsymbol{M}_2^\top} \right] &= \E\left[\norm{x_n^2 \left(\hat{R}^{-1} - R^{-1}\right) R^{-1} \boldsymbol{\hat{\Sigma}}_B \boldsymbol{\hat{\Sigma}}_B^\top }\right]
	 \leq  \left(\E\left[\left\lvert x_n\right\rvert^{10}\right]\right)^\frac{1}{5}  \left(\E\left[\left\lvert\hat{R}^{-1}-R^{-1}\right\rvert 	^5\right]\right)^\frac{1}{5}\\
&\times  \left(\E\left[\left\lvert R^{-1}	\right\rvert^5\right]\right)^\frac{1}{5} \left(\E\left[\norm{\boldsymbol{\hat{\Sigma}}_B}^5\right]\right)^\frac{2}{5}= o\left(1\right).
\end{align*}
As concerns (\ref{prop:cond3}), decompose the vector $\boldsymbol{\hat{\Sigma}}_B$ as follows:
\begin{align*}
\boldsymbol{\hat{\Sigma}}_B = N^{-\frac{1}{2}} \sum_{t=1}^{N}\boldsymbol{\varepsilon_{t}} x_t = \boldsymbol{u} + \boldsymbol{w}, \quad
\text{ with } \quad \boldsymbol{u} = N^{-\frac{1}{2}} \sum_{t=1}^{a}\boldsymbol{\varepsilon_{t}} x_t \quad\text{and}\quad \boldsymbol{w} = N^{-\frac{1}{2}} \sum_{t=b}^{N}\boldsymbol{\varepsilon_{t}} x_t.
\end{align*}
Hence, we have that
\begin{align*}
\E\left[\boldsymbol{M}_2\boldsymbol{M}_2^\top\right]-  (\mathrm{IV}) &= \E\left[\boldsymbol{u} R^{-1} x_n x_n R^{-1} \boldsymbol{u}^\top\right] - \E\left[\boldsymbol{u} R^{-1} R R^{-1} \boldsymbol{u}^\top\right]\\
	& + \E\left[\boldsymbol{u} R^{-1} x_n x_n R^{-1} \boldsymbol{w}^\top\right] - \E\left[\boldsymbol{u} R^{-1} R R^{-1} \boldsymbol{w}^\top\right]\\
  & + \E\left[\boldsymbol{w} R^{-1} x_n x_n R^{-1} \boldsymbol{u}^\top\right] - \E\left[\boldsymbol{w} R^{-1} R R^{-1} \boldsymbol{u}^\top\right]\\
  & + \E\left[\boldsymbol{w} R^{-1} x_n x_n R^{-1} \boldsymbol{w}^\top\right] - \E\left[\boldsymbol{w} R^{-1} R R^{-1} \boldsymbol{w}^\top\right].
\end{align*}
The law of iterated expectations implies that:
\begin{align}
&\norm{\E\left[\boldsymbol{M}_2 \boldsymbol{M}_2^\top\right]-(\mathrm{IV})}\nonumber\\
&\leq \norm{\E\left[\boldsymbol{u} R^{-1} \left(\E\left[x_n^2 \mid \mathcal{F}_{n-l_n}\right]-R\right) R^{-1} \boldsymbol{u}^\top\right]}\label{norm1}\\
&+ \norm{\E\left[\boldsymbol{u} R^{-1} \left(\E\left[x_n^2 \mid \mathcal{F}_{n-l_n}\right]-R\right) R^{-1} \boldsymbol{w}^\top\right]}\label{norm2}\\
&+ \norm{\E\left[\boldsymbol{w} R^{-1} \left(\E\left[x_n^2 \mid \mathcal{F}_{n-l_n}\right]-R\right) R^{-1} \boldsymbol{u}^\top\right]}\label{norm3}\\
&+ \norm{\E\left[\boldsymbol{w} R^{-1} \left(\E\left[x_n^2 \mid \mathcal{F}_{n-l_n}\right]-R\right) R^{-1} \boldsymbol{w}^\top\right]}\label{norm4}.
\end{align}
By using arguments previously developed, it is easy to see that, under Assumptions (C4) and (C6), (\ref{norm1}) -- (\ref{norm4}) asymptotically vanish. Therefore, conditions (\ref{prop:cond1}) -- (\ref{prop:cond3}) are fulfilled and the proof is completed.
\end{proof}
\begin{proposition}\label{prop:thm1_3}
	Under assumptions of Theorem~\ref{thm:MSPE_dec_case1}, it holds that:
\begin{equation}\label{eqn:prop3}
  (\mathrm{III})   = - (D) + o\left(1\right),
\end{equation}
where
\begin{align*}
	(D) &= \E\left[ R^{-1} \left[\sum_{j=h}^{N-1} \left\{\left( \beps_1 x_1\right) \left(\beps_{j+1} x_{j+1}\right)^\top + \left(\beps_{j+1} x_{j+1}\right) \left( \beps_1 x_1\right)^\top\right\} \right] \right]
\end{align*}
\end{proposition}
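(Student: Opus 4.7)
The plan is to exploit the fact that with $m=1$ the matrix $R^{-1}$ is simply a scalar; this allows $(\mathrm{III})$ to be rewritten as an indexed sum of cross-covariance matrices $\bC_{h,s}$ whose range nearly matches that in $(D)$, so that only a short boundary sum remains, which I then show vanishes under the mixing conditions.

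Concretely, I would plug $\boldsymbol{\hat{\Sigma}}_A = \sum_{t=1}^{N}\beps_t x_t$ into the definition of $(\mathrm{III})$, factor the scalar $R^{-1}$ outside the expectation, and use the stationarity of the cross-moments from (C2).1 together with the substitution $s=n-t$ to obtain
\begin{align*}
(\mathrm{III}) &= -R^{-1}\sum_{t=1}^{N}\bigl\{\E[(x_t\beps_t)(x_n\beps_n)^\top] + \E[(x_n\beps_n)(x_t\beps_t)^\top]\bigr\}\\
&= -R^{-1}\sum_{s=h}^{n-1}\bigl(\bC_{h,s} + \bC_{h,s}^\top\bigr),
\end{align*}
where $t\in[1,N]$ maps bijectively to $s=n-t\in[h,n-1]$, recalling $N=n-h$. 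The same stationarity argument rewrites $(D)=R^{-1}\sum_{j=h}^{N-1}(\bC_{h,j}+\bC_{h,j}^\top)$. Since $n-1=N+h-1$, cancelling the common range yields
\begin{align*}
(\mathrm{III})+(D) = -R^{-1}\sum_{s=N}^{N+h-1}\bigl(\bC_{h,s}+\bC_{h,s}^\top\bigr),
\end{align*}
a sum of exactly $h$ matrices, each at a lag $s$ that tends to infinity with $n$.

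It remains to prove that $\|\bC_{h,s}\|=o(1)$ as $s\to\infty$. For the extreme lag $s=n-1$ this is the direct entrywise content of (C2).2. For each of the other $h-1$ boundary lags, I would invoke the mixing-type decay encoded in (C6).3 together with the moment bounds of (C3): write $x_{t+s}\beps_{t+s}$ as $\E[x_{t+s}\beps_{t+s}\mid\mathcal{F}_{t+s-k}]$ plus a remainder for a suitable sequence $k=k(s)\to\infty$ with $k(s)<s$, apply H\"older's inequality to the resulting products, and use that the $L^3$-norm of the conditional expectation is $o(1)$ by (C6).3, while the moment factors remain bounded by (C3). Because $h$ is fixed, adding at most $h$ such $o(1)$ terms preserves $o(1)$, which establishes (\ref{eqn:prop3}).

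The main obstacle lies in this final decay step. Since (C6) only guarantees that $x_t$, but not $\beps_t$, is $\mathcal{F}_t$-measurable, one cannot straightforwardly condition $x_{t+s}\beps_{t+s}$ on a past $\sigma$-field and pull out $x_t\beps_t$ as measurable. The remedy is to apply the conditional-expectation decomposition symmetrically at both endpoints, invoking (C6).3 at times $t$ and $t+s$ and controlling the resulting cross terms via H\"older with the moment bounds in (C3); the non-singularity of $R$ from (C5) then absorbs the scalar factor $R^{-1}$.
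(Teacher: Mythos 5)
Your proof follows essentially the same route as the paper: expand $(\mathrm{III})$, use the stationarity in (C2).1 to re-index the sum over lags $s=n-t\in[h,n-1]$, and show that the $h$ boundary terms at lags $s=N,\dots,N+h-1$ are negligible. The only divergence is in that last step, where the paper simply invokes (C2).2 --- which, applied along the diverging index $n'=s+1$ for each of the $h$ fixed offsets and combined with stationarity, already yields $\bC_{h,s}=o\left((n')^{-1}\right)=o(1)$ for \emph{every} boundary lag, not just $s=n-1$ --- so the extra conditioning argument via (C6).3, together with the measurability difficulty you rightly flag in it, is unnecessary.
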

\begin{proof}
  The result readily follows upon noting that, under Assumption (C2) and the weakly stationarity of the process $\{x_t\}$, it holds that:
  \begin{align*}
(\mathrm{III})  & = - \sum_{t=1}^{N}  \E\left[R^{-1}  \left\{ \left(\beps_t x_t\right) \left(\beps_n x_n\right)^\top +  \left(\beps_n x_n\right) \left(\beps_t x_t\right)^\top\right\} \right]\\
&	= - \ \sum_{j=h}^{n-1}  \E\left[R^{-1}  \left\{ \left(\beps_1 x_1\right) \left(\beps_{j+1} x_{j+1}\right)^\top +  \left(\beps_{j+1} x_{j+1}\right) \left(\beps_1 x_1\right)^\top\right\} \right]\\
&= - \E\left[ R^{-1} \left(\sum_{j=h}^{N-1} \left\{\left( \beps_1 x_1\right) \left(\beps_{j+1} x_{j+1}\right)^\top + \left(\beps_{j+1} x_{j+1}\right) \left( \beps_1 x_1\right)^\top\right\} \right) \right] +o(1).
\end{align*}
\end{proof}
\begin{proposition}\label{prop:thm1_4}
	Under assumptions of Theorem~\ref{thm:MSPE_dec_case1}, it holds that:
\begin{equation}\label{eqn:prop4}
(\mathrm{IV})  = (1) + \left(Q\right) + \left(D\right) + o(1),
\end{equation}
where
\begin{align*}
(1)  & = N^{-1} \E\left[ R^{-1} \left\{\sum_{t=1}^{N} \left(\beps_t x_t\right)\left(\beps_t x_t\right)^\top \right\} \right], \\
\left(Q\right) &= \E\left[ R^{-1} \left[\sum_{s=1}^{h-1} \left\{\left( \beps_1 x_1\right) \left(\beps_{s+1} x_{s+1}\right)^\top + \left(\beps_{s+1} x_{s+1}\right) \left( \beps_1 x_1 \right)^\top\right\} \right] \right]\\
	\left(D\right) &= \E\left[ R^{-1} \left[\sum_{j=h}^{N-1} \left\{\left( \beps_1 x_1\right) \left(\beps_{j+1} x_{j+1}\right)^\top + \left(\beps_{j+1} x_{j+1}\right) \left( \beps_1 x_1\right)^\top\right\} \right] \right]
\end{align*}
\end{proposition}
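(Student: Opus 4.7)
\noindent\textbf{Proof plan for Proposition~\ref{prop:thm1_4}.}
The plan is to expand $(\mathrm{IV})$ as a double sum, isolate the diagonal contribution, and then split the off-diagonal contribution into ``short lag'' terms (which match $(Q)$) and ``long lag'' terms (which match $(D)$ up to a vanishing remainder). Since $m=1$ in this section, $R^{-1}$ is a scalar and commutes out of the expectation, so
\[
(\mathrm{IV}) \;=\; R^{-1}\, \E\!\left[\boldsymbol{\hat\Sigma}_B \boldsymbol{\hat\Sigma}_B^\top\right]
\;=\; R^{-1} N^{-1}\sum_{t=1}^{N}\sum_{s=1}^{N} \E\!\left[(\beps_t x_t)(\beps_s x_s)^\top\right].
\]
The $t=s$ diagonal is exactly $(1)$, so the task reduces to analyzing the off-diagonal part.

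\medskip
Next, I would parametrise the off-diagonal sum by the lag $j=s-t\ge 1$ (handling $s<t$ by transposition). Invoking weak stationarity together with Assumption~(C2).1, each inner sum collapses to $(N-j) \bC_{h,j}$, yielding
\[
(\mathrm{IV})-(1) \;=\; R^{-1}\sum_{j=1}^{N-1}\frac{N-j}{N}\bigl(\bC_{h,j}+\bC_{h,j}^\top\bigr).
\]
I would then split this sum at $j=h$. For $1\le j\le h-1$ the index is fixed while $N\to\infty$, so $(N-j)/N\to 1$ and each $\bC_{h,j}$ is finite by H\"older applied to (C3); this limit is exactly the matrix $(Q)$ and gives an $o(1)$ remainder.

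\medskip
The main obstacle is the ``long lag'' block $h\le j\le N-1$, where both the number of lags and the weights $(N-j)/N$ depend on $N$. Writing $(N-j)/N = 1 - j/N$, the sum decomposes as $(D) - R^{-1} N^{-1}\sum_{j=h}^{N-1} j\,(\bC_{h,j}+\bC_{h,j}^\top)$, so I need to show the latter is $o(1)$. This is precisely where Assumption~(C2).2 enters: it gives the entrywise decay $\|\bC_{h,j}\| = o(j^{-1})$. Using this, I would bound the remainder by an $\varepsilon$--$J$ argument: for any $\delta>0$, pick $J$ so that $j\|\bC_{h,j}\|<\delta$ for all $j\ge J$; then
\[
N^{-1}\sum_{j=h}^{N-1} j\,\|\bC_{h,j}\| \;\le\; \frac{J}{N}\max_{j<J}\|\bC_{h,j}\| \;+\; \frac{N-J}{N}\,\delta \;\longrightarrow\; \delta,
\]
which, since $\delta$ is arbitrary, yields the required $o(1)$ bound and completes the identification of the long-lag block with $(D)$. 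Combining the diagonal, short-lag, and long-lag contributions delivers the claim. The only delicate step is this tail estimate; the rest is bookkeeping based on stationarity and the scalar nature of $R$.
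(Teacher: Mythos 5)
Your proposal is correct and takes essentially the same route as the paper: the double-sum expansion of $(\mathrm{IV})$, the diagonal/off-diagonal split giving $(1)$, stationarity via (C2).1 to collapse each lag, and the weight $(N-j)/N = 1 - j/N$ producing the remainder $N^{-1}\sum_{j} j\,\bC_{h,j}$, which the paper simply asserts is $o(1)$ by (C2) where you spell out the Ces\`aro-type argument. One cosmetic slip: your first bound should read $\frac{J^{2}}{N}\max_{j<J}\norm{\bC_{h,j}}$ (or $\frac{J}{N}\max_{j<J} j\,\norm{\bC_{h,j}}$) rather than $\frac{J}{N}\max_{j<J}\norm{\bC_{h,j}}$, but since $J$ is fixed this does not affect the conclusion.
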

\begin{proof}
  Let
  \begin{equation*}
    (2)  = N^{-1} \E\left[ R^{-1} \left\{\sum_{j=1}^{N-1} \sum_{k=j+1}^{N} \left(\beps_j x_j\right) \left(\beps_k x_k\right)^\top\right\} \right],
  \end{equation*}
  and note that $(\mathrm{IV})-(1) =  (2) + (2)^\top$. Moreover
  \begin{align}
  	(2) & = N^{-1} \E\left[ R^{-1} \left\{\sum_{j=1}^{N-1} \left( N-j \right) \left( \beps_1 x_1\right) \left(\beps_{j+1} x_{j+1}\right)^\top \right\} \right] \nonumber\\
	& = \E\left[ R^{-1} \left\{\sum_{j=1}^{N-1} \left( \beps_1 x_1\right) \left(\beps_{j+1} x_{j+1}\right)^\top \right\} \right] \label{eqn:a} \\
	& - N^{-1} \E\left[ R^{-1} \left\{\sum_{j=1}^{N-1} j\left( \beps_1 x_1\right) \left(\beps_{j+1} x_{j+1}\right)^\top \right\} \right].\label{eqn:b}
\end{align}
Assumptions (C2) implies that (\ref{eqn:b}) is $o(1)$. Since (\ref{eqn:a}) can be written as
\begin{equation*}
  \E\left[ R^{-1} \left\{\sum_{s=1}^{h-1} \left( \beps_1 x_1\right) \left(\beps_{s+1} x_{s+1}\right)^\top \right\} \right] + \E\left[ R^{-1} \left\{\sum_{j=h}^{N-1} \left( \beps_1 x_1\right) \left(\beps_{j+1} x_{j+1}\right)^\top \right\} \right],
\end{equation*}
then $(\ref{eqn:a})+(\ref{eqn:a})^\top=(Q)+(D)$ and this completes the proof.
\end{proof}
\subsubsection*{Proof of Theorem~\ref{thm:MSPE_dec_case1}}
We prove that:
\begin{align}
&N \left\{ \E\left[ \left( \by_{n+h} - \hat{\by}_{n+h} \right) \left( \by_{n+h} - \hat{\by}_{n+h} \right)^\top - \E\left[
		\beps_{n}^{(h)} \beps_{n}^{(h)^\top} \right] \right] \right\}\nonumber\\
		& =  R^{-1} \E\left[   \left(\beps_{1}^{(h)} x_1\right)\left(\beps_{1}^{(h)} x_1\right)^\top \right] \label{eq:main1} \\
		&+ R^{-1} \E\left[  \sum_{s=1}^{h-1} \left\{\left( \beps_1^{(h)} x_1\right) \left(\beps_{s+1}^{(h)} x_{s+1}\right)^\top + \left(\beps_{s+1}^{(h)} x_{s+1}\right) \left( \beps_1^{(h)} x_1 \right)^\top\right\} \right] \label{eq:main2}\\
&+ o(1). \nonumber
\end{align}
Since $$\left( \boldsymbol{\hat{\beta}}-\bBeta\right)   = \hat{R}^{-1} \left( N^{-1} \sum_{t=1}^{N} x_t \boldsymbol{y}_{t+h} \right) - \bBeta =  \hat{R}^{-1} \left(N^{-1} \sum_{t=1}^{N} x_t \beps_t \right),$$
 routine algebra implies that:
\begin{equation}
\E\left[ \left(\by_{n+h} - \mathbf{\hat{y}}_{n+h}\right) \left(\by_{n+h} - \mathbf{\hat{y}}_{n+h}\right)^\top \right]
- \E\left[\beps_n \beps_n^\top\right] =
(\mathrm{I})+(\mathrm{II}).
\end{equation}
By applying Propositions~\ref{prop:thm1_1} -- Propositions~\ref{prop:thm1_4}, we have:
\begin{align*}
N \left\{ \E\left[ \left( \by_{n+h} - \hat{\by}_{n+h} \right) \left( \by_{n+h} - \hat{\by}_{n+h} \right)^\top - \E\left[
		\beps_{n}^{(h)} \beps_{n}^{(h)^\top} \right] \right] \right\}= N(\mathrm{I})+N(\mathrm{II})\\
=(\mathrm{III})+(\mathrm{IV})+o(1)
=(1)+(Q)+o(1).
\end{align*}
The proof is completed upon noting that $(1)=(\ref{eq:main1})$ and $(Q)=(\ref{eq:main2})$.

\subsection{Proof of Theorem~\ref{thm:MOME_case1}}\label{sec:A2}

We start proving that
\begin{equation}\label{mi_hat}
  \hat{\bmi}_h= \bmi_h + O_p (n^{-1/2}).
\end{equation}
Note that
\begin{align*}
\hat{\bmi}_h=N^{-1} \left(\sum_{t=1}^{N} \beps_{t} \beps_{t}^\top\right)- \left(N^{-1}\sum_{t=1}^{N} x_t \beps_{t}\right) \hat{R}^{-1}\left(N^{-1}\sum_{s=1}^{N} x_s \beps_{s}\right)^\top
\end{align*}
hence, it holds that $\hat{\bmi}_h - \bmi_h$ equals
\begin{align}
& N^{-1} \left\{\sum_{t=1}^{N} \left( \beps_{t} \beps_{t}^\top - \E\left[ \beps_{1} \beps_{1}^\top\right] \right) \right\}\label{eq:mi_A}\\
& -\left(N^{-1}\sum_{t=1}^{N} x_t \beps_{t}\right)\hat{R}^{-1} \left(N^{-1}\sum_{t=1}^{N} x_t \beps_{t}\right)^\top.\label{eq:mi_B}
\end{align}
Assumption (A1) implies that $(\ref{eq:mi_A})=O_p(n^{-1/2})$ whereas, by combining Assumptions (A3) and (A4) with the non-singularity of $R$ and H\"older's inequality, it can be shown that $(\ref{eq:mi_B})=O_p(n^{-1})$ and hence the proof of (\ref{mi_hat}) is complete.
\par\noindent
Next, we prove that
\begin{equation*}
\hat{\bvi}_h= \bvi_h + o_p (1).
\end{equation*}
It suffices to show that
\begin{equation}\label{Cs_hat}
\hat{\bC}_{h,s} = \bC_{h,s}+ o_p (1).
\end{equation}
It holds that $\hat{\bC}_{h,s}$ is equal to
\begin{align}
 &\phantom{-}\left( N-s\right)^{-1} \sum_{t=1}^{N-s} \left( x_t \beps_{t}^{\top}\right)^{\top}  \left( x_{t+s} \beps_{t+s}^{\top}\right)\label{Cs_A}\\
& - \left( N-s\right)^{-1} \sum_{t=1}^{N-s} x_t^2 x_{t+s} \left( \hat{\bBeta}_n(h) - \bBeta_h \right)  \beps_{t+s}^{\top}\label{Cs_B}\\
& - \left( N-s\right)^{-1} \sum_{t=1}^{N-s} x_t x_{t+s}^2 \beps_{t} \left( \hat{\bBeta}_n(h) - \bBeta_h \right)^{\top}\label{Cs_C}\\
& + \left( N-s\right)^{-1} \sum_{t=1}^{N-s} x_t^2 x_{t+s}^2 \left( \hat{\bBeta}_n(h) - \bBeta_h \right)\left( \hat{\bBeta}_n(h) - \bBeta_h \right)^{\top}.\label{Cs_D}
\end{align}
We prove that (\ref{Cs_B}) is $o_p(1)$ componentwise. To this end consider:
$$\E\left[\left( N-s\right)^{-1}\abs*{\sum_{t=1}^{N-s} x_t^2 x_{t+s} \eps_{t+s,i}}\right],$$
with $\eps_{t+s,i}$ being the $i$-th component of the vector $\beps_{t+s}$. The triangular inequality and H\"older's inequality imply that:
\begin{align*}
\E\left[\left( N-s\right)^{-1}\abs*{\sum_{t=1}^{N-s} x_t^2 x_{t+s} \eps_{t+s,i}}\right]
&\leq \left( N-s\right)^{-1}\sum_{t=1}^{N-s}\E\left[\abs*{x_t^2 x_{t+s} \eps_{t+s,i}}\right]\\
&\leq  (N-s)^{-1}\sum_{t=1}^{N-s}\left\{\left(\E\left[x_t^4\right]\right)^{1/2}\left( \E\left[\abs*{x_{t+s}\varepsilon_{t+s,i}}^2	\right]\right)^{1/2}\right\}.
\end{align*}
Since $ \hat{\bBeta}_n(h) - \bBeta_h  = \hat{R}^{-1} \left( N^{-1} \sum_{j=1}^{N} x_j \beps_{j,h} \right)$, by combining Assumptions (A3), (A4) and (A5) with Chebyshev's inequality we obtain that (\ref{Cs_B}) is $o_p(1)$. Similarly, we can verify that (\ref{Cs_C}) and (\ref{Cs_D}) are $o_p(1)$. Lastly, Condition (A2) implies that $(\ref{Cs_A}) = \bC_{h,s}+ o_p (1)$, hence (\ref{Cs_hat}) is verified and the whole proof is complete.

\subsection{Proof of Theorem~\ref{thm:EFF_case1}}\label{sec:A3}

By Theorem \ref{thm:MOME_case1}  the $\vmric_h$ defined in (\ref{eq:vmric})  can be written as:
\begin{equation}\label{eq:theo3_proof_1}
\vmric_h \left( \hat{\ell}_h\right) = \min_{1 \leq \ell \leq K} \norm{ \bmi_h + O_p (n^{-1/2}) } + \min_{ \ell \in M_1} \norm{\frac{\alpha_n}{n}\bvi_h + o_p \left(\frac{\alpha_n}{n}\right)}.
\end{equation}
Therefore,
\begin{equation}
	\lim\limits_{n\rightarrow \infty} \vmric_h \left( \hat{\ell}_h\right) = \min_{1 \leq \ell \leq K}
	\norm{\bmi_h}
\end{equation}
and hence
\begin{equation}
	\lim\limits_{n\rightarrow +\infty} \Pr \left( \hat{\ell}_h \in M_1 \right) = 1.
\end{equation}
Now, consider two models $\ell_1$ and $\ell_2$ in the candidates set $J_{\ell_1}, J_{\ell_2} \in M_1$ such that $\vi_h(\ell_1)\neq \vi_h(\ell_2)$. We show that
\begin{equation}\label{eqn:eff_main}
	\lim_{n\rightarrow \infty}\Pr \left[	\sign{\vmric_h(\ell_1) - \vmric_h(\ell_2)} =	\sign{\norm{\boldsymbol{\vi}_h(\ell_1)}- \norm{\boldsymbol{\vi}_h(\ell_2)}}\right] = 1.
\end{equation}
By defining $\bmi^*_h$ to be the minimum value of $\bmi_h$ over the family of candidate models, we have:
\begin{align*}
	\vmric_h (\ell_1) &= \norm{ \bmi_h^* + O_p (n^{-1/2}) } + \norm{ \frac{\alpha_n}{n}\bvi_h(\ell_1) + o_p\left(\frac{\alpha_n}{n}\right)},\\
	\vmric_h (\ell_2) &= \norm{ \bmi_h^* + O_p (n^{-1/2}) } +	\norm{ \frac{\alpha_n}{n} \bvi_h(\ell_2) + o_p\left(\frac{\alpha_n}{n}\right)}.
\end{align*}
Therefore, for sufficiently large $n$, it holds that:
\begin{equation*}
	\vmric_h (\ell_1) - \vmric_h (\ell_2)	= \left\|\frac{\alpha_n}{n}\right\| \left( \norm{ \bvi_h(\ell_1)} - \norm{ \bvi_h(\ell_2)}\right).
\end{equation*}
 Thus
\begin{equation*}
\sign{ \vmric_h(\ell_1) - \vmric_h(\ell_2)} = \sign{	\norm{\bvi_h(\ell_1)}	 - \norm{\bvi_h(\ell_2)}},
\end{equation*}
and (\ref{eqn:eff_main}) is verified and implies that
\begin{equation}
	\lim_{n\rightarrow \infty}	\Pr\left(\hat{\ell}_h \in M_2\right) = 1.
\end{equation}
This completes the proof.

\section*{Acknowledgments}

Greta Goracci acknowledges the support of Libera Università di Bolzano, Grant WW201L (ESAMD).
\appendix

\section{Technical Lemma}\label{appendx}
The following lemma is a general result that holds for the multivariate predictor case (i.e., $m\geq 1)$. It relies upon Assumption~(C1) of \cite[][p. 1068]{HSU2019}, which reduces to our Assumption~(C1) when $m=1$.
\begin{lemma}\label{lma:technical}
Let $\bR$ and $\hat{\bR}$ be defined in (\ref{eqn:def_R}) and  $\tilde{\bR}$ be the multivariate version of Eq.~(\ref{eqn:R_tilde}). Then, for $0<\gamma\leq 5$, it holds that:
	\begin{align}
\label{eq:lemma1_1}	\E \left[ \norm{\tilde{\bR}^{-1} - \hat{\bR}^{-1} }^\gamma \right] &= O \left[ \left( \frac{l_n}{n} \right)^\gamma \right],\\
\label{eq:lemma1_2}	\E \left[ \norm{\bR^{-1} -  \hat{\bR}^{-1} }^\gamma \right] &= O \left[ n^{-\gamma/2} \right],\\
\label{eq:lemma1_3}	\E \left[ \norm{\bR^{-1} -  \tilde{\bR}^{-1}   }^\gamma \right] &= O \left[ n^{-\gamma/2} \right],
	\end{align}
with $l_n$ being defined in (\ref{eqn:ln}).
\end{lemma}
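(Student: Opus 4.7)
My plan is to derive all three bounds from one template: apply the matrix resolvent identity $\mathbf{A}^{-1}-\mathbf{B}^{-1}=-\mathbf{A}^{-1}(\mathbf{A}-\mathbf{B})\mathbf{B}^{-1}$ and the triangle/submultiplicative properties of the operator norm, then use H\"older's inequality to decouple the inverse-moment factors (controlled by (C5)) from the size of the perturbation (controlled by (C1)). Since $\gamma\leq 5<q_1$, I can always choose H\"older conjugates $p,q>1$ with $\gamma q\leq q_1$ while leaving $\gamma p$ arbitrary, which is precisely what makes the template compatible with the two structural assumptions at hand.

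For (\ref{eq:lemma1_2}) I would write $\hbR^{-1}-\bR^{-1}=-\hbR^{-1}(\hbR-\bR)\bR^{-1}$, raise the operator norm to the $\gamma$-th power and apply H\"older. The inverse factor is bounded in $L^{\gamma p}$ by (C5), $\norm{\bR^{-1}}$ is deterministic, and the perturbation satisfies $\E\bigl[\norm{\hbR-\bR}^{\gamma q}\bigr]=O(N^{-\gamma q/2})$ by (C1) with $n_1=1,\ n_2=N$. Multiplying yields the stated rate $O(n^{-\gamma/2})$. For (\ref{eq:lemma1_3}) I would rerun exactly the same argument with $\tilde{\bR}$ replacing $\hbR$ and sample size $n-l_n$ replacing $N$; since $l_n/n\to 0$ we still have $(n-l_n)^{-\gamma/2}=O(n^{-\gamma/2})$. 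The only extra ingredient is an analogue $\E\bigl[\norm{\tilde{\bR}^{-1}}^{q}\bigr]=O(1)$ of (C5), which I would get by a truncation argument: on the event $\{\norm{\tilde{\bR}-\bR}\leq\tfrac12\lambda_{\min}(\bR)\}$ one has $\norm{\tilde{\bR}^{-1}}\leq 2\norm{\bR^{-1}}$, while the complementary event has polynomially decaying probability by (C1) plus Markov, so its contribution to any finite moment of $\norm{\tilde{\bR}^{-1}}$ is negligible (after, if necessary, also truncating on $\{\norm{\tilde{\bR}^{-1}}\leq n^\kappa\}$ for a large $\kappa$).

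For (\ref{eq:lemma1_1}) the resolvent identity gives $\tilde{\bR}^{-1}-\hbR^{-1}=-\tilde{\bR}^{-1}(\tilde{\bR}-\hbR)\hbR^{-1}$, so the key new estimate is a bound of order $(l_n/n)^{\gamma q}$ on $\E\bigl[\norm{\tilde{\bR}-\hbR}^{\gamma q}\bigr]$. To produce it I would split
\[
\tilde{\bR}-\hbR=\bigl[(n-l_n)^{-1}-N^{-1}\bigr]\sum_{t=1}^{n-l_n}\bx_t\bx_t^\top \;-\; N^{-1}\sum_{t=n-l_n+1}^{N}\bx_t\bx_t^\top,
\]
noting that the scalar prefactor $(n-l_n)^{-1}-N^{-1}$ is of order $l_n/n^2$ so that the first summand is $O(l_n/n)$ in $L^{q_1}$ after bounding $(n-l_n)^{-1}\sum_{t=1}^{n-l_n}\bx_t\bx_t^\top$ by its mean plus a (C1)-controlled fluctuation; the second summand is an average of $O(l_n)$ terms normalised by $N^{-1}$, hence again $O(l_n/n)$ in $L^{q_1}$ by applying (C1) on the short range $[n-l_n+1,N]$. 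H\"older, together with (C5) for $\hbR^{-1}$ and the truncated bound from the previous paragraph for $\tilde{\bR}^{-1}$, then delivers the rate $(l_n/n)^\gamma$. The main obstacle I anticipate is precisely this last bookkeeping: keeping track of the $l_n/n^2$ scaling correction against the $l_n$-length boundary sum, and transferring the (C5) bound from $\hbR^{-1}$ to $\tilde{\bR}^{-1}$ so that \emph{both} resolvent factors in the identity are tame at the same H\"older exponent. No genuinely new probabilistic input beyond (C1) and (C5) is required.
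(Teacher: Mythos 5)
Your proposal is correct and follows essentially the same route as the paper: the resolvent identity plus submultiplicativity of the norm, H\"older's inequality to separate the inverse factors (via (C5)) from the perturbation (via (C1)), and the identical two-term splitting of $\tilde{\bR}-\hbR$ into the rescaling correction and the boundary sum, each of order $l_n/n$. If anything you are more explicit than the paper, which merely asserts that $\norm{\tilde{\bR}^{-1}}=O(1)$ "can be easily proved" where you supply the truncation argument, and which reduces matters to first-moment bounds on the perturbations where your bookkeeping of the H\"older exponents ($\gamma q\leq q_1$, $\gamma p$ free) is the cleaner way to reach the $\gamma$-th moment conclusion.
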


\begin{proof}
Triangle inequality implies that
\begin{align*}
\norm{\tilde{\bR}^{-1} - \hat{\bR}^{-1} } &\leq \norm{\hat{\bR}^{-1}}\norm{\hat{\bR}-\tilde{\bR}}\norm{\hat{\bR}^{-1}},\\
\norm{\bR^{-1} -  \hat{\bR}^{-1} } &\leq \norm{\hat{\bR}^{-1}}\norm{\hat{\bR}-{\bR}}\norm{{\bR}^{-1}},\\
\norm{\bR^{-1} -  \tilde{\bR}^{-1} } &\leq \norm{\tilde{\bR}^{-1}}\norm{\hat{\bR}-{\bR}}\norm{{\bR}^{-1}}.
\end{align*}
Since $\bR$ is invertible we have that $\norm{{\bR}^{-1}}=O(1)$; under Assumption (C5), $\norm{\hat{\bR}^{-1}}=O(1)$. Moreover, it can be easily proved that also $\norm{\tilde{\bR}^{-1}}=O(1)$. Therefore, by deploying H\"older's inequality, the results will be verified if we prove the following three conditions:
\begin{align}
\E\left[\norm{\hat{\bR}-\tilde{\bR}}\right]&=O\left(\frac{l_n}{n}\right),\label{eqn:lemCond1}\\
\E\left[\norm{\hat{\bR}-{\bR}}\right]&=O\left(n^{-1/2}\right),\label{eqn:lemCond2}\\
\E\left[\norm{\tilde{\bR}-{\bR}}\right]&=O\left(n^{-1/2}\right).\label{eqn:lemCond3}
\end{align}
Let $a=n-l_n$ and $b=a+1$. As for (\ref{eqn:lemCond1}) note that
\begin{align*}
\E\left[\norm{\hat{\bR}-\tilde{\bR}}\right]&\leq \E\left[\norm{\left( \frac{1}{N} - \frac{1}{a}\right)  \sum_{t=1}^{a} \bx_t \bx_t^\top}\right] + \E\left[\norm{ \frac{1}{N}  \sum_{t=b}^{N} \bx_t \bx_t^\top}\right] = O\left(\frac{l_n}{n}\right).
\end{align*}
Conditions (\ref{eqn:lemCond2}) and (\ref{eqn:lemCond3}) readily derive from Assumption (C1) and hence the proof is completed.
\end{proof}

\bibliographystyle{plainnat}
\bibliography{biblio_V5}

\end{document}